\title{H\"older continuity of normal cycles and of support measures of convex bodies}
\author{Daniel Hug and Rolf Schneider}
\date{}
\newcommand{\R}{{\mathbb R}}
\newcommand{\Sn}{{\mathbb S}^{n-1}}
\newcommand{\Kn}{{\mathcal K}^n}
\newcommand{\N}{{\mathbb N}}
\newcommand{\Ha}{{\mathcal H}}
\newcommand{\D}{{\rm d}}
\newcommand{\fed}{\,\rule{.1mm}{.20cm}\rule{.20cm}{.1mm}\,}
\newcommand{\RR}{\ensuremath{\mathbb{R}}}
\newcommand{\NN}{\ensuremath{\mathbb{N}}}
\newcommand{\rmwedge}{\mbox{$\bigwedge$}}
\DeclareMathOperator{\Nor}{Nor}
\newtheorem{theorem}{Theorem}
\newtheorem{lemma}{Lemma}[section]
\def\section{
\setcounter{equation}{0} \setcounter{theorem}{0} \@startsection
{section}{1}{\z@}{-4.0ex plus -1ex minus
    -.2ex}{2.3ex plus .2ex}{\bf\Large}}
\def\subsection{\@startsection{subsection}{2}{\z@}{-3.25ex plus-1ex
    minus-.2ex}{1.5ex plus.2ex}{\reset@font\bf\Large}}
\begin{document}

\maketitle

\begin{abstract}
We provide an estimate of the distance (in the dual flat seminorm) of the normal cycles of convex bodies with given Hausdorff distance. 
We also give an estimate (in the bounded Lipschitz metric) of the support measures of convex bodies.
\\[1mm]
{\em 2010 Mathematics Subject Classification:} primary 52A20, secondary 52A22\\[1mm]
{\em Keywords:} Normal cycle, flat metric, weak convergence, valuation, support measure
\end{abstract}

%\renewcommand{\thefootnote}{{}}
%\footnote{The first
%author was supported by ??.}

\section{Introduction}\label{sec1}

In 1986, Martina Z\"ahle \cite{Zae86} presented a current representation of Federer's curvature measures. For $k=0,\dots,n-1$, she defined a differential form $\varphi_k$ of degree $n-1$ on the Euclidean space $\R^{2n}$ such that, for each compact set $K$ of positive reach in $\R^n$ and for each Borel set $\beta$ in $\R^n$, the evaluation of ${\bf 1}_\beta \varphi_k$ at the normal cycle of $K$ yields the $k$th curvature measure of $K$, evaluated  at $\beta$. The curvature measures had previously been introduced by Federer \cite{Fed59} in a different way. The approach using currents has later been investigated, applied and considerably extended in work of Z\"ahle \cite{Zae87, Zae90}, Rataj and Z\"ahle \cite{RZ01, RZ03, RZ05}, Fu \cite{Fu94}, Pokorn\'{y} and Rataj \cite{PR12}, and others. The normal cycle has thus become an important tool for the treatment of curvature properties of very general classes of sets, and it has found many applications in integral geometry. In this note, we restrict ourselves to convex bodies. The normal cycle $T_K$ of a convex body $K$ in $\R^n$ (we recall the definition in Section \ref{sec2}) has a useful continuity property. If $K_i$, $i\in\NN$, and $K$ are convex bodies in $\R^n$ and $K_i\to K$ in the Hausdorff metric, as $i\to\infty$, then $T_{K_i}\to T_K$ in the dual flat seminorm for currents. This was stated without proof in \cite[p.~251]{Zae90} and was proved in \cite[Thm.~3.1]{RZ01}; see also \cite[Thm.~3.1]{Fu10}. The continuity property is used, for example, in \cite{HS13} in the course of the proof for a classification theorem for local tensor valuations on the space of convex bodies. 

The purpose of this note is to obtain a quantitative improvement of the preceding continuity result, in the form of a H\"older estimate. To formulate it, we denote by $\Kn$ the space of convex bodies (nonempty compact convex subsets) in Euclidean space  $\R^n$, as usual equipped with the Hausdorff metric $d_H$. We write $B^n$ and $\Sn$ for the unit ball and the unit sphere, respectively, in $\R^n$, and $\Ha^k$ for the $k$-dimensional Hausdorff measure. For $\rho>0$, $K_\rho:=K+\rho B^n$ is the parallel body of the convex body $K$ at distance $\rho$. Below, we identify $\R^n\times\R^n$ with $\R^{2n}$ and denote by $\mathcal{E}^{n-1}(\R^{2n})=\mathcal{E}(\R^{2n},\bigwedge^{n-1}\R^{2n})$ the vector space of all differential forms of degree $n-1$ on $\R^{2n}$ with real coefficients and of class $C^\infty$.

\begin{theorem}\label{T1}
Let $K,L\in\Kn$, and let $M\subset \R^{2n}$ be a compact convex set containing $K_1\times\Sn$ and $L_1\times\Sn$. Then, for each  $\varphi\in \mathcal{E}^{n-1}(\R^{2n})$, 
$$ |T_K(\varphi)-T_L(\varphi)|\le C(M,\varphi)\,d_H(K,L)^{\frac{1}{2n+1}},$$
w\textsc{}here $C(M,\varphi)$ is a constant which depends (for given dimension) on $M$ and on the Lipschitz constant and the sup-norm of $\varphi$ on $M$.
\end{theorem}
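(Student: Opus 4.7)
The plan is to smooth $K$ and $L$ by replacing them with their outer parallel bodies $K_\rho$ and $L_\rho$ at some scale $\rho>0$, and to balance two resulting errors: the \emph{smoothing error} from replacing $T_K$ by $T_{K_\rho}$, and the \emph{geometric error} from comparing $T_{K_\rho}$ with $T_{L_\rho}$. For the smoothing error I use that the affine map $T_\rho(x,u):=(x+\rho u,u)$ pushes $\Nor(K)$ onto $\Nor(K_\rho)$ and satisfies $(T_\rho)_*T_K=T_{K_\rho}$ (a fact first verified on polytopes and extended by continuity). Hence $|T_K(\varphi)-T_{K_\rho}(\varphi)|=|T_K(\varphi-T_\rho^*\varphi)|$; since $DT_\rho-\mathrm{id}=O(\rho)$ and $\varphi$ is smooth on $M$, a first-order Taylor estimate gives $\|\varphi-T_\rho^*\varphi\|_{\infty,M}\le C(\varphi,M)\rho$, and the mass of $T_K$ is controlled in terms of $M$, yielding $|T_K(\varphi)-T_{K_\rho}(\varphi)|+|T_L(\varphi)-T_{L_\rho}(\varphi)|\le C_1\rho$.

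For the geometric error I exploit that $\partial K_\rho$ and $\partial L_\rho$ are $C^{1,1}$ hypersurfaces, with Gauss maps $n_K,n_L$ of Lipschitz constant at most $1/\rho$. I introduce a radial readjustment map $\Phi\colon\partial K_\rho\to\partial L_\rho$ defined by $\Phi(y):=p_L(y)+\rho(y-p_L(y))/|y-p_L(y)|$, where $p_L$ is the nearest-point projection onto $L$. Setting $\delta:=d_H(K,L)$, for $\delta\le\rho/2$ one verifies that $\Phi$ is an orientation-preserving Lipschitz bijection with $|\Phi(y)-y|\le\delta$ and with Lipschitz constant bounded independently of $\rho$. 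The linear homotopy $H(y,t):=(1-t)(y,n_K(y))+t(\Phi(y),n_L(\Phi(y)))$ in $\R^{2n}$ then produces an $n$-current $S$ (the pushforward of the integration current over $\partial K_\rho\times[0,1]$) with $\partial S=T_{L_\rho}-T_{K_\rho}$, so that $|T_{K_\rho}(\varphi)-T_{L_\rho}(\varphi)|=|S(d\varphi)|\le \|d\varphi\|_{\infty,M}\cdot \mathbf{M}(S)$.

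The mass $\mathbf{M}(S)$ is estimated via the $n$-dimensional Jacobian $|J_nH|\le|\partial_tH|\cdot\|D_yH\|^{n-1}$. The decisive input is the classical square-root estimate for metric projections onto convex bodies: applying the convexity inequality $|y-q|^2\ge|y-p_L(y)|^2+|p_L(y)-q|^2$ to $q:=p_L(p_K(y))$ gives $|p_K(y)-p_L(y)|\le C\sqrt{\delta\rho}$ for $y\in\partial K_\rho$, which in turn yields $|n_L(\Phi(y))-n_K(y)|\le C\sqrt{\delta/\rho}$ and hence $|\partial_tH|\le C\sqrt{\delta/\rho}$. Combined with $\|D_yH\|\le C/\rho$ (from the Gauss-map Lipschitz bounds) and the bounded $\Ha^{n-1}$-measure of $\partial K_\rho$, this gives $\mathbf{M}(S)\le C_2\,\delta^{1/2}\rho^{-n+1/2}$. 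Summing all errors yields $|T_K(\varphi)-T_L(\varphi)|\le C(\rho+\delta^{1/2}\rho^{-n+1/2})$, and the choice $\rho:=\delta^{1/(2n+1)}$ balances the two terms and produces the H\"older exponent $1/(2n+1)$.

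The hard part is the sharp Jacobian estimate: the square-root rate $|p_K-p_L|=O(\sqrt{\delta\rho})$ couples with the $(n-1)$-fold blow-up $1/\rho^{n-1}$ of $\|D_yH\|$ to dictate the exponent $1/(2n+1)$, and an improvement in either ingredient would sharpen the final exponent. Secondary technicalities include verifying that $\Phi$ is a genuine orientation-preserving bijection (so that the homotopy at $t=1$ represents $T_{L_\rho}$) and that the image of $H$ remains in $M$, so that the sup-norm and Lipschitz bounds on $\varphi$ are applicable.
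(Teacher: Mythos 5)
Your proposal matches the paper's proof in both architecture and all decisive estimates: outer-parallel smoothing of both bodies with error $O(\rho)$, a bi-Lipschitz comparison map between the smoothed normal bundles whose displacement is $O(\sqrt{\delta/\rho})$ and whose Lipschitz constant is $O(1/\rho)$, a flat-norm bound of the form $\text{mass}\times\text{displacement}\times\text{Lip}^{n-1}$ (you unpack this as an explicit affine homotopy and its Jacobian; the paper invokes Federer's \cite[4.1.14]{Fed69}), and the balance $\rho=\delta^{1/(2n+1)}$. Your map $\Phi$ is in fact the nearest-point projection onto $\partial L_\rho$, i.e.\ the same map the paper uses; the only genuine technical divergence is that you get the square-root bound on $|p_K-p_L|$ from the Pythagorean inequality for metric projections, whereas the paper bounds $u_K\bullet u_L$ directly by an inclusion argument. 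One caveat worth stressing: the orientation-preserving property of the comparison map, which you defer as a ``secondary technicality,'' is where the paper expends real care (Lemma \ref{orient}), explicitly noting that this point is not treated in the earlier literature, so it needs a full argument rather than a remark.
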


According to the definition of the dual flat seminorm, this result can be interpreted as local H\"older continuity of the normal cycles of convex bodies with respect to the Hausdorff metric and the dual flat seminorm. A similar, but essentially different quantitative result is obtained in \cite[Thm. 2]{CM06}. It refers to more general sets and is, therefore, less explicit. On the other hand, its restriction to convex bodies does not yield the present result, since at least one of the sets in \cite{CM06} has to be bounded by a submanifold of class $C^2$.

As mentioned, normal cycles are useful for introducing curvature measures for quite general classes of sets. In the theory of convex bodies, they have been used to introduce a generalization of curvature measures, the support measures. For $\R^n$, these are Borel measures on the product of $\R^n$ and the unit sphere $\Sn$, with the property that their marginal measures are the curvature measures on one hand and the surface area measures on the other hand. On the space of convex bodies with the Hausdorff metric, the support measures are weakly continuous. We improve this statement by showing that the support measures are locally H\"older continuous with respect to the bounded Lipschitz metric $d_{bL}$ (we recall its definition in Section 4). Let $\Lambda_i(K,\cdot)$ denote the $i$th support measure of $K\in\Kn$, normalized as explained in Section 4.

\begin{theorem}\label{T2}
Let $K,L\in\Kn$ be convex bodies, and let $R$ be the radius of a ball containing $K_2$ and $L_2$. Then
$$ d_{bL}(\Lambda_i(K,\cdot), \Lambda_i(L,\cdot)) \le C(R)\, d_H(K,L)^{1/2}$$
for $i\in\{0,\dots,n-1\}$, where $C(R)$ is a constant which (for given dimension) depends only on $R$.
\end{theorem}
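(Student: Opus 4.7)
The plan is to bypass Theorem \ref{T1} (which would only yield the weaker H\"older exponent $1/(2n+1)$) and instead work directly with the \emph{local Steiner formula}
$$V_f(K,\rho):=\int_{K_\rho\setminus K} f(p_K(x),u_K(x))\,\D\Ha^n(x)=\sum_{i=0}^{n-1}c_{n,i}\,\rho^{n-i}\int f\,\D\Lambda_i(K,\cdot),$$
valid for every bounded Borel function $f\colon\R^n\times\Sn\to\R$; here $p_K$ denotes the metric projection onto $K$, $u_K(x):=(x-p_K(x))/\|x-p_K(x)\|$, and the $c_{n,i}>0$ are explicit dimensional constants. I would fix a test function $f$ with $\|f\|_\infty\le 1$ and Lipschitz constant at most $1$, estimate $V_f(K,\rho)-V_f(L,\rho)$ uniformly for $\rho$ in a suitable interval, and then recover the individual differences $\int f\,\D\Lambda_i(K,\cdot)-\int f\,\D\Lambda_i(L,\cdot)$ by polynomial interpolation in $\rho$.

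Set $\delta:=d_H(K,L)$, assume $\delta\le 1/n$ (the complementary regime is trivial, using the uniform bound on the total masses $\Lambda_i(K,\R^n\times\Sn)$ for $K$ contained in a fixed ball), and split $V_f(K,\rho)-V_f(L,\rho)$ according to whether $x$ lies in the symmetric difference $(K_\rho\setminus K)\triangle(L_\rho\setminus L)$ or in the common domain. The symmetric-difference contribution is bounded by $C(R)\delta$: from $K\subset L_\delta$ and $L\subset K_\delta$ one has $K_\rho\triangle L_\rho\subset(K_{\rho+\delta}\setminus K_\rho)\cup(L_{\rho+\delta}\setminus L_\rho)$, and the volumes of these thin parallel shells are controlled by $R$ and $\delta$ via Steiner's formula. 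On the common domain, the Lipschitz property of $f$ reduces the problem to bounding $\|p_K(x)-p_L(x)\|$ and $\|u_K(x)-u_L(x)\|$, and the central pointwise lemma is
$$\|p_K(x)-p_L(x)\|\le 2\sqrt{d_K(x)\,\delta}+\delta.$$
This is proved by expanding $\|x-p_L(x)\|^2$ via $x-p_L(x)=(x-p_K(x))-(p_L(x)-p_K(x))$, using $\|x-p_L(x)\|\le d_K(x)+\delta$, and bounding the cross term $(x-p_K(x))\cdot(p_L(x)-p_K(x))\le\delta\,d_K(x)$ by applying the supporting-hyperplane characterization $(y-p_K(x))\cdot(x-p_K(x))\le 0$ for $y\in K$ to a point $y\in K$ within distance $\delta$ of $p_L(x)\in L\subset K_\delta$. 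A direct calculation from $u_K(x)=(x-p_K(x))/d_K(x)$ then yields $\|u_K(x)-u_L(x)\|\le 2\sqrt{\delta/d_K(x)}+2\delta/d_K(x)$.

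Combining these estimates with the coarea formula and uniform bounds on the perimeters $\Ha^{n-1}(\partial K_s),\Ha^{n-1}(\partial L_s)$ for $s\in[0,1]$ (which follow from the containment of $K_2,L_2$ in a ball of radius $R$), and handling the boundary layer $\{x:d_K(x)\le\delta\}$ by the trivial $L^\infty$ estimate on $f$, I expect to obtain
$$|V_f(K,\rho)-V_f(L,\rho)|\le C(R)\sqrt{\delta}\qquad\text{uniformly for }\rho\in[\delta,1].$$
Since the left-hand side is a polynomial $\sum_{i=0}^{n-1}a_i(f)\,\rho^{n-i}$ of degree $n$ in $\rho$, evaluating at the $n$ fixed nodes $\rho_k=k/n$, $k=1,\dots,n$, and inverting the associated Vandermonde system produces $|a_i(f)|\le C(n,R)\sqrt{\delta}$. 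Since each $a_i(f)$ equals $c_{n,i}\bigl[\int f\,\D\Lambda_i(K,\cdot)-\int f\,\D\Lambda_i(L,\cdot)\bigr]$ with a positive dimensional constant $c_{n,i}$, taking the supremum over admissible $f$ yields the claimed bound on $d_{bL}(\Lambda_i(K,\cdot),\Lambda_i(L,\cdot))$.

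The main technical obstacle is the integration of the pointwise bounds for $\|u_K-u_L\|$, which is singular as $x$ approaches $K$. The cutoff at scale $\delta$ must be chosen carefully, and the logarithmic factor arising from $\int_\delta^1\delta/s\,\D s\sim\delta|\log\delta|$ must be absorbed into the $\sqrt{\delta}$ rate; this is precisely why the uniform bound on $V_f(K,\rho)-V_f(L,\rho)$ is only useful for $\rho\ge\delta$, and it accounts for the separate trivial treatment of the regime $\delta>1/n$.
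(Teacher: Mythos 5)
Your proposal is correct and follows the same high-level architecture as the paper's proof: decompose the integral $\int f\,\D\mu_{K,\rho}-\int f\,\D\mu_{L,\rho}$ into a symmetric-difference piece and a common-domain piece, bound the common-domain piece via pointwise estimates on $|p_K-p_L|$ and $|u_K-u_L|$, get a uniform $O(\sqrt\delta)$ bound on $d_{bL}(\mu_{K,\rho},\mu_{L,\rho})$, and then invert the Vandermonde system at the nodes $\rho_j=j/n$ to pass to the individual $\Lambda_i$. (Your $V_f(K,\rho)$ is exactly $\int f\,\D\mu_{K,\rho}$, and your common-domain/symmetric-difference split is exactly Lemma~\ref{L4.1}.) The genuine difference is in how you handle the two pointwise estimates. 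The paper cites Schneider's Lemma 1.8.9 to get the uniform bound $|p_K-p_L|\le\sqrt{5D\,\delta}$, and cites Theorem 3.5 of \cite{CCM10} (an $L^1$ gradient estimate for semiconcave functions, applied to $d_K,d_L$) to get $\int_{K^\rho\cap L^\rho}|u_K-u_L|\,\D\Ha^n\le C(R)\sqrt\delta$ directly. You instead prove both from scratch: your expansion of $\|x-p_L(x)\|^2$ with the supporting-hyperplane inequality gives the pointwise bound $|p_K(x)-p_L(x)|\le 2\sqrt{d_K(x)\delta}+\delta$ (which is sharper than Schneider's near $\partial K$ and equivalent in order globally), and from it you derive $|u_K-u_L|\le 2\sqrt{\delta/d_K}+2\delta/d_K$, which you then integrate by coarea with a cutoff at scale $\delta$. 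That integration is the one genuinely delicate step you add: you correctly flag the $\delta\log(1/\delta)$ term from $\int_\delta^1\delta/s\,\D s$ and note it is $O(\sqrt\delta)$. So your route trades two black-box citations for an elementary self-contained computation; the paper's route is shorter because it outsources exactly the work you do by hand. One small writing slip: what you need is $(K_\rho\setminus K)\triangle(L_\rho\setminus L)$, and the inclusion $K_\rho\triangle L_\rho\subset(K_{\rho+\delta}\setminus K_\rho)\cup(L_{\rho+\delta}\setminus L_\rho)$ covers only the outer-boundary contribution; you also need $K\triangle L\subset(K_\delta\setminus K)\cup(L_\delta\setminus L)$ for the inner one (as the paper does), but this gives the same $O(\delta)$ volume bound, so it does not affect the conclusion.
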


An estimate of this type, though with a smaller exponent of the Hausdorff distance, could be derived directly from Theorem \ref{T1}. We shall obtain the stronger result of Theorem \ref{T2} by adapting an approach due to Chazal, Cohen--Steiner and M\'{e}rigot \cite{CCM10}.

A special case of Theorem \ref{T2} concerns the area measure $S_{n-1}(K,\cdot)$. If $\omega\subset\Sn$ is a Borel set, then $S_{n-1}(K,\omega)=2\Lambda_{n-1}(K,\R_n\times\omega)$. From Theorem \ref{T2} it follows under the same assumptions on $K$ and $L$ that
\begin{equation}\label{4.14} 
d_{bL}(S_{n-1}(K,\cdot),S_{n-1}(L,\cdot))\le C'(R)\,d_H(K,L)^{1/2}.
\end{equation}
We want to present some motivation for proving such an inequality.

The area measure is the subject of a famous existence and uniqueness theorem due to Minkowski (see, e.g., \cite[Sec.~7.1]{Sch93}). The uniqueness assertion has been improved by some stability results. One of these (going back to Diskant; see 
\cite[Thm.~7.2.2]{Sch93}) says that for convex bodies $K,L\in \Kn$ one has
\begin{equation}\label{4.15} 
d_H(K,L') \le \gamma \,\|S_{n-1}(K,\cdot)-S_{n-1}(L,\cdot)\|_{\rm TV}^{1/n}
\end{equation}
for a suitable translate $L'$ of $L$, where $\|\cdot\|_{\rm TV}$ denotes the total variation norm. Here $\gamma>0$ is a constant depending only on the dimension and on  a-priori bounds for the inradius and circumradius of $K$ and $L$. 

The stability result (\ref{4.15}) has the blemish that its assumption is too strong: the left side can be small even if the right side is large. For example, if $K$ is a unit cube and $L$ is a rotated image of $K$, arbitrarily close to $K$ but not a translate of it, then $\|S_{n-1}(K,\cdot)-S_{n-1}(L,\cdot)\|_{\rm TV}\ge 1$. It seems, therefore, more meaningful to replace the right-hand side in (\ref{4.15}) by an expression involving a metric for measures that metrizes the weak convergence. For the L\'{e}vy--Prokhorov metric, such a stability result was proved in \cite{HS02}. It was deduced from a corresponding stability result for the bounded Lipschitz metric (which is implicit in the proof, though it was not stated explicitly), namely
\begin{equation}\label{4.16} 
d_H(K,L') \le \gamma\,d_{bL}(S_{n-1}(K,\cdot),S_{n-1}(L,\cdot))^{1/n}
\end{equation}
for a suitable translate $L'$ of $L$, with a constant $\gamma$ as above. It appears that the H\"older continuity (\ref{4.14}) is, in principle, a more elementary fact than its reverse, the stability estimate (\ref{4.16}), and should therefore have preceded it.

\section{Notation and preliminaries}\label{sec2}

We have to use several definitions and results from geometric measure theory, therefore we choose most of our notation as in Federer's \cite{Fed69} book, in order to facilitate the comparison. For example, we denote the scalar product in $\R^n$ by $\bullet$ and the induced norm by $|\cdot|$. The same notation is used also for other Euclidean spaces which will come up in the following. We identify $\R^n$ and its dual space via the given scalar product. 

Next we recall some notation and basic facts from multilinear algebra. Let $V$ be finite-dimensional real vector spaces. Then 
$\bigwedge_m V$, for $m\in\N_0$, denotes the vector space of {\em $m$-vectors} of $V$, and $\bigwedge^m V$ is  the vector space of all  $m$-linear alternating maps from $V^m$ to $\R$, whose elements are called {\em $m$-covectors}. The map $\bigwedge^m V \to\text{Hom}(\bigwedge_mV,\R)$, which assigns to $f\in \bigwedge^m V$ the homomorphism $v_1\wedge \ldots\wedge v_m\mapsto f(v_1,\ldots,v_m)$, allows us to identify $\bigwedge^m V$ and $\text{Hom}(\bigwedge_mV,\R)$. By this identification,  the dual pairing of elements $a\in\bigwedge_m V$ and $\varphi\in \bigwedge^m (V,\R)$ can be defined by $\langle a,\varphi\rangle :=\varphi(a)$.  If $V'$ is another finite-dimensional vector space and  $f:V\to V'$ is a linear map, then a linear map $\bigwedge_mf:\bigwedge_mV\to\bigwedge_mV'$ is determined by 
$$
(\rmwedge_mf)(v_1\wedge\ldots\wedge v_m)=f(v_1)\wedge\ldots\wedge f(v_m),
$$
for all $v_1,\ldots,v_m\in V$.

Given an inner product space $(V,\bullet)$ with norm $|\cdot|$ we obtain an inner product on $\bigwedge_mV$. For 
$\xi,\eta\in \bigwedge_mV$ with $\xi=v_1\wedge\ldots\wedge v_m$ and $\eta=w_1\wedge\ldots\wedge w_m$, where 
$v_i,w_j\in V$, we define 
$$
\xi\bullet \eta=\det\left(\langle v_i,w_j\rangle_{i,j=1}^m\right).
$$
This is independent of the particular representation of $\xi,\eta$. For general $\xi,\eta\in \bigwedge_mV$ the inner product is defined by linear extension, and then we put $|\xi|:=\sqrt{\xi\bullet \xi}$ for $\xi\in \bigwedge_mV$. If $(b_1,\ldots,b_n)$ is an orthonormal basis of $V$, then the $m$-vectors $b_{i_1}\wedge\ldots\wedge b_{i_m}$ with $1\le i_1<\ldots<i_m\le n$ form an orthonormal basis of $\bigwedge_mV$. Moreover, if $\xi\in \bigwedge_pV$ or $\eta\in \bigwedge_qV$ is simple, then 
\begin{equation}\label{eq2}
|\xi\wedge \eta|\le |\xi|\, |\eta|.
\end{equation}
Let $(b_1,\ldots,b_n)$ be an orthonormal basis of $V$, and let $(b_1^*,\ldots,b_n^*)$ be the dual basis in $V^*= \bigwedge^1 V$. We endow $\bigwedge^m V$ with the inner product for which the vectors $b_{i_1}^*\wedge\ldots\wedge b_{i_m}^*$, for $1\le i_1<\ldots<i_m\le n$, are an orthonormal basis.  Then 
\begin{equation}\label{eq3}
|\langle \xi,\Phi\rangle|\le |\xi|\,|\Phi|
\end{equation} 
for $\xi\in\bigwedge_mV$ and $\Phi\in \bigwedge^mV$.  

The preceding facts are essentially taken from \cite[Section 1.7]{Fed69}. 

Let $V$ be an $n$-dimensional inner product space. Then {\em comass} and {\em mass} are defined as in \cite[Section 1.8]{Fed69}. In particular, for $\Phi\in\bigwedge^m V$ the comass $\|\Phi\|$ of $\Phi$ satisfies $\|\Phi\|=|\Phi|$ if $\Phi$ is simple. Moreover, for $\xi\in\bigwedge_m V$ the mass $\|\xi\|$ of $\xi$ satisfies  $\|\xi\|=|\xi|$ if $\xi$ is simple.

\vspace{2mm}

Now we turn to convex bodies. For notions from the theory of convex bodies which are not explained here, we refer to \cite{Sch93}. Let $K\in\Kn$. The metric projection, which is denoted by $p(K,\cdot)$, maps $\R^n$ to $ K$, and $u(K,x):=|x-p(K,x)|^{-1}(x-p(K,x))$ is defined for $x\in\R^n\setminus K$. Let $\partial K$ denote the topological boundary of $K$. The map $F:\partial K_1\to\R^n\times \Sn$ given by $F(x):= (p(K,x),u(K,x))$ is bi-Lipschitz, and the image is the {\em normal bundle} $\Nor K$ of $K$, which is an $(n-1)$ rectifiable subset of $\R^{2n}$. Hence, 
for $\mathcal{H}^{n-1}$-almost all $(x,u)\in\Nor K$, the set of $(\mathcal{H}^{n-1}\fed \Nor K,n-1)$ approximate tangent vectors at $(x,u)$ is an $(n-1)$-dimensional linear subspace of $\RR^{2n}$, which is denoted by $\textrm{Tan}^{n-1}(\mathcal{H}^{n-1}\fed \Nor K,(x,u))$. Let $\Pi_1:\R^n\times\R^n\to\R^n$, $(x,u)\mapsto x$, and $\Pi_2:\R^n\times\R^n\to\R^n$, $(x,u)\mapsto u$, be  projection maps and $\Omega_n$ the volume form for which $\Omega_n(e_1,\ldots,e_n)=1$ for the standard basis $(e_1,\ldots,e_n)$ of $\R^n$. 
The following statements hold for $\mathcal{H}^{n-1}$-almost all $(x,u)\in\Nor K$. First, we can choose an orthonormal basis $(a_1(x,u),\ldots,a_{n-1}(x,u))$ of $\textrm{Tan}^{n-1}(\mathcal{H}^{n-1}\fed \Nor K,(x,u))$ such that the $(n-1)$-vector $a_K(x,u):=a_1(x,u)\wedge\ldots\wedge a_{n-1}(x,u)$ satisfies
\begin{equation}\label{orientations}
\left\langle \rmwedge_{n-1}(\Pi_1+\varrho\,\Pi_2) a_K(x,u)\wedge u,\Omega_n\right\rangle>0
\end{equation}
for all $\varrho>0$ and thus determines an orientation of $\textrm{Tan}^{n-1}(\mathcal{H}^{n-1}\fed \Nor(K),(x,u))$. 
Then the {\em normal cycle} associated with the convex body $K$ is the $(n-1)$-dimensional current in 
$\R^{2n}$ which is defined by 
$$
T_K:=\left(\mathcal{H}^{n-1}\fed\Nor K\right)\wedge a_K.
$$
More generally, we can define
$$
T_K(\varphi)=\int_{\Nor K}\langle a_K(x,u),\varphi(x,u)\rangle\, \mathcal{H}^{n-1}(d(x,u))
$$
for all $\mathcal{H}^{n-1}\fed \Nor K$-integrable functions $\varphi:\R^n\times\R^n\to \bigwedge^{n-1}\R^{2n}$. Here we use that $T_K$ is a rectifiable current, which has compact support, and thus $T_K$ can be defined for a larger class of functions than just for smooth differential forms. 

Sometimes it is convenient to work with the orthonormal basis of the approximate tangent space of $\Nor K$ at $(x,u)$ that is given by  
$$
a_i(x,u):=\left( \alpha_i(x,u)\,b_i(x,u),\sqrt{1-\alpha_i(x,u)^2}\,b_i(x,u)\right),
$$
where $(b_1(x,u),\ldots,b_{n-1}(x,u))$ is a suitably chosen orthonormal basis of $u^\perp$ (the orthogonal complement of the linear subspace spanned by $u$) such that  $(b_1,\ldots,b_{n-1},u)$ has the same orientation as the standard basis $(e_1,\ldots,e_n)$ of $\R^n$, and $\alpha_i(x,u)\in [0,1]$ for $i=1,\ldots,n-1$. 
Note that the dependence of $a_i,b_i,\alpha_i$ on $K$ is not made explicit by our notation. The data $b_i, \alpha_i$, $i=1,\ldots,n-1$, are essentially uniquely determined (cf.~\cite[Proposition 3 and Lemma 2]{RZ05}). Moreover, we can assume that $b_i(x+\varepsilon u,u)=b_i(x,u)$, independent of $\varepsilon>0$, where $(x,u)\in\Nor K$ and $(x+\varepsilon u,u)\in\Nor K_\varepsilon$  with $K_\varepsilon:=K+\varepsilon B^n$. However, in general $\alpha_i(x+\varepsilon u,u)$ is not independent of $\varepsilon$ and will be positive for $\varepsilon>0$. See \cite{Zae86, RZ01, RZ05, Hug95, Hug98}  for a geometric 
description of the numbers $\alpha_i(x,u)$ in terms of generalized curvatures and for arguments establishing the facts stated here.

\section{Proof of Theorem \ref{T1}}\label{sec3}

In order to obtain an upper bound for $|T_K-T_L|$, we first establish an upper bound for $|T_{A_\varepsilon}-T_A|$, for $A\in\{K,L\}$ and $\varepsilon\in [0,1]$, which is done in Lemma \ref{Lemma4.3}. Then we derive an upper bound for  $|T_{K_\varepsilon}-T_{L_\varepsilon}|$ under the assumption that the Hausdorff distance of $K$ and $L$ is sufficiently small. This bound is provided in Lemma \ref{Lemma4.6}, which in turn is based on four preparatory lemmas.

\begin{lemma}\label{Lemma4.3} 
Let $K\in\Kn$ and $\varepsilon\in [0,1]$. Let $\varphi\in \mathcal{E}^{n-1}(\R^{2n})$. Then 
$$
\left|T_{K_{\varepsilon}}(\varphi)-T_K(\varphi)\right|\le C(K,\varphi)\, \varepsilon,
$$
where $C(K,\varphi)$ is a real constant, which depends on the maximum and the Lipschitz constant of $\varphi$ on $K_1\times \Sn$ and on  $\mathcal{H}^{n-1}(\partial K_1)$.
\end{lemma}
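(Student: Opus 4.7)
The plan is to introduce the natural bi-Lipschitz bijection
$$
\Phi_\varepsilon:\Nor K\to\Nor K_\varepsilon,\qquad \Phi_\varepsilon(x,u):=(x+\varepsilon u,u),
$$
which preserves the outer-normal coordinate and has inverse $(y,v)\mapsto(y-\varepsilon v,v)$, and to transfer $T_{K_\varepsilon}(\varphi)$ back to an integral over $\Nor K$ via the area formula, then compare with $T_K(\varphi)$ using that both $\Phi_\varepsilon$ and its differential differ from the identity by $O(\varepsilon)$.

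The key step is the Jacobian identity. Using the parametrization recalled at the end of Section \ref{sec2}, one may choose the \emph{same} orthonormal basis $b_1,\ldots,b_{n-1}$ of $u^\perp$ at $(x,u)\in\Nor K$ and at $\Phi_\varepsilon(x,u)\in\Nor K_\varepsilon$, so that $a_i(x,u)=(\alpha_ib_i,\sqrt{1-\alpha_i^2}\,b_i)$ and $a_i^\varepsilon(\Phi_\varepsilon(x,u))=(\alpha_i^\varepsilon b_i,\sqrt{1-(\alpha_i^\varepsilon)^2}\,b_i)$. A short calculation gives $D\Phi_\varepsilon(a_i)=\lambda_i^\varepsilon a_i^\varepsilon$ with
$$
\lambda_i^\varepsilon=\sqrt{(\alpha_i+\varepsilon\sqrt{1-\alpha_i^2})^2+(1-\alpha_i^2)}\ge 1,
$$
so the Jacobian of $\Phi_\varepsilon$ is $J_\varepsilon=\prod_{i=1}^{n-1}\lambda_i^\varepsilon$ and
$$
\rmwedge_{n-1}D\Phi_\varepsilon(a_K(x,u))=J_\varepsilon(x,u)\,a_{K_\varepsilon}(\Phi_\varepsilon(x,u));
$$
the orientations automatically match, since each $\lambda_i^\varepsilon$ is positive and (\ref{orientations}) has the same sign convention for $K$ and $K_\varepsilon$. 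The area formula therefore yields
$$
T_{K_\varepsilon}(\varphi)=\int_{\Nor K}\left\langle\rmwedge_{n-1}D\Phi_\varepsilon(a_K(x,u)),\varphi(\Phi_\varepsilon(x,u))\right\rangle\Ha^{n-1}(d(x,u)).
$$

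Subtracting $T_K(\varphi)$ and inserting the telescope $\pm\langle a_K,\varphi\circ\Phi_\varepsilon\rangle$, (\ref{eq3}) bounds
$$
|T_{K_\varepsilon}(\varphi)-T_K(\varphi)|\le\int_{\Nor K}\bigl(|\rmwedge_{n-1}D\Phi_\varepsilon(a_K)-a_K|\,|\varphi\circ\Phi_\varepsilon|+|a_K|\,|\varphi\circ\Phi_\varepsilon-\varphi|\bigr)\,d\Ha^{n-1}.
$$
Since $D\Phi_\varepsilon-\mathrm{id}_{\R^{2n}}:(v,w)\mapsto(\varepsilon w,0)$ has operator norm at most $\varepsilon$, expanding $\rmwedge_{n-1}D\Phi_\varepsilon(a_K)-a_K$ telescopically into $n-1$ simple summands and applying (\ref{eq2}) with $|a_i|=1$ yields $|\rmwedge_{n-1}D\Phi_\varepsilon(a_K)-a_K|\le c_n\varepsilon$ for a dimensional constant $c_n$. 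The second integrand is controlled by $\mathrm{Lip}(\varphi|_{K_1\times\Sn})\,\varepsilon$, using $|\Phi_\varepsilon(x,u)-(x,u)|=\varepsilon$ and the fact that both endpoints lie in $K_1\times\Sn$ when $\varepsilon\in[0,1]$. Finally, the bi-Lipschitz map $F:\partial K_1\to\Nor K$ recalled in Section \ref{sec2} gives $\Ha^{n-1}(\Nor K)\le c'_n\Ha^{n-1}(\partial K_1)$, and assembling the three estimates produces the asserted bound with $C(K,\varphi)$ depending only on $n$, $\sup_{K_1\times\Sn}|\varphi|$, $\mathrm{Lip}(\varphi|_{K_1\times\Sn})$, and $\Ha^{n-1}(\partial K_1)$. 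The main obstacle is the Jacobian/orientation identity above; once the common frame $b_i$ of Section \ref{sec2} is in hand, it reduces to a short linear-algebra computation, after which everything else is first-order and routine.
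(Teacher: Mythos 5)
Your proposal is correct and follows essentially the same route as the paper: both push $T_{K_\varepsilon}$ back to $\Nor K$ via the bi-Lipschitz map $(x,u)\mapsto(x+\varepsilon u,u)$ and the area/coarea formula, telescope the integrand, bound $|\rmwedge_{n-1}DF_\varepsilon(a_K)-a_K|$ by $O(\varepsilon)$ using \eqref{eq2} and the $O(\varepsilon)$ displacement of $\varphi$ by its Lipschitz constant, and finally control $\Ha^{n-1}(\Nor K)$ via the Lipschitz map from $\partial K_1$. The only cosmetic difference is that you verify the orientation compatibility by an explicit frame computation of $\lambda_i^\varepsilon>0$, whereas the paper derives it more briefly from Federer's Theorem 3.2.22 together with \eqref{orientations}.
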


\begin{proof}
We consider the bi-Lipschitz map
$$
F_\varepsilon:\Nor K\to\Nor K_\varepsilon,\qquad (x,u)\mapsto (x+\varepsilon u,u).
$$
The extension of $F_\varepsilon$ to all $(x,u)\in\R^{2n}$ by $F_\varepsilon(x,u):= (x+\varepsilon u,u)$ is differentiable for all $(x,u)\in\R^{2n}$. By \cite[Theorem 3.2.22 (1)]{Fed69}, for $\mathcal{H}^{n-1}$-almost all $(x,u)\in\Nor K$ the approximate Jacobian of $F_\varepsilon$ satisfies 
\begin{equation}\label{eq2.1}
\textrm{ap}\,J_{n-1}F_\varepsilon (x,u)=\left\|\rmwedge_{n-1}\textrm{ap}\,DF_\varepsilon (x,u) a_K(x,u)\right\|>0,
\end{equation}
and the simple orienting $(n-1)$-vectors $a_K(x,u)$ and $a_{K_\varepsilon}(x+\varepsilon u,u)$ are related by  
\begin{equation}\label{eq2.2}
a_{K_\varepsilon}(x+\varepsilon u,u)=\frac{\bigwedge_{n-1}\textrm{ap}\,DF_\varepsilon (x,u) a_K(x,u)}{\left\|\bigwedge_{n-1}\textrm{ap}\,DF_\varepsilon (x,u) a_K(x,u)\right\|}.
\end{equation}
It follows from \eqref{orientations} that the orientations coincide.  
Thus, first using the coarea theorem \cite[Theorem 3.2.22]{Fed69} and then \eqref{eq2.1} and \eqref{eq2.2}, we get
\begin{align*}
T_{K_\varepsilon}(\varphi)&=\int_{\Nor K_\varepsilon}\langle a_{K_\varepsilon},\varphi\rangle\, \D\mathcal{H}^{n-1}\\
&=\int_{\Nor K}\langle a_{K_\varepsilon}\circ F_\varepsilon (x,u),\varphi\circ F_\varepsilon (x,u)\rangle\,
\textrm{ap}\,J_{n-1}F_\varepsilon (x,u)\, \mathcal{H}^{n-1}(\D (x,u))\\
&=\int_{\Nor K}\left\langle \rmwedge_{n-1}\textrm{ap}\,DF_\varepsilon (x,u) a_K(x,u),\varphi\circ F_\varepsilon (x,u)\right\rangle \mathcal{H}^{n-1}(\D (x,u)).
\end{align*}
By the triangle inequality, we obtain
\begin{eqnarray*}
\left|T_{K_\varepsilon}(\varphi)-T_K(\varphi)\right| &\le& \int_{\Nor K} \Big\{\left|\left\langle\left( \rmwedge_{n-1} \textrm{ap}\,DF_\varepsilon (x,u)-\rmwedge_{n-1}\textrm{id}\right) a_K(x,u),\varphi\circ F_\varepsilon (x,u)\right\rangle\right|\\
& & +\left|\left\langle a_K(x,u),\varphi(x+\varepsilon u,u)-\varphi(x,u)\right\rangle\right|\Big\}\,
 \mathcal{H}^{n-1}(\D (x,u)).
\end{eqnarray*}
We have
\begin{eqnarray*}
& & \left|\left\langle\left( \rmwedge_{n-1}\textrm{ap}\,DF_\varepsilon (x,u)-\rmwedge_{n-1}\textrm{id}\right) a_K(x,u),\varphi\circ F_\varepsilon (x,u)\right\rangle\right|\\
& & \le |\varphi(x+\varepsilon u,u)|\, 
\left|\left( \rmwedge_{n-1}\textrm{ap}\,DF_\varepsilon (x,u)-\rmwedge_{n-1}\textrm{id}\right) a_K(x,u)\right|,
\end{eqnarray*}      
where we  used \eqref{eq3}. Now $a_K(x,u)$ is of the form $\rmwedge_{i=1}^{n-1}(v_i,w_i)$ with suitable $(v_i,w_i)\in \R^{2n}$ and $|v_i|^2+|w_i|^2=1$. Moreover, we have $DF_\varepsilon(x,u)(v,w)=(v+\varepsilon w,w)$, for all $(v,w)\in \R^{2n}$. Writing $z_i^0:=v_i$, $z_i^1:=w_i$, we have
\begin{align*}
& \left| \left( \rmwedge_{n-1}\textrm{ap}\,DF_\varepsilon (x,u)-\rmwedge_{n-1}\textrm{id}\right) a_K(x,u)\right|
 =\left|\bigwedge_{i=1}^{n-1}(v_i+\varepsilon w_i,w_i)-\bigwedge_{i=1}^{n-1}(v_i,w_i)\right| \\
&= \left| \sum_{\alpha_i\in\{0,1\}} \varepsilon^{\sum\alpha_i} \bigwedge_{i=1}^{n-1}(z_i^{\alpha_i},w_i) - \bigwedge_{i=1}^{n-1}(z_i^0,w_i)\right|
 \le \varepsilon \sum_{\alpha_i\in\{0,1\}, \,\sum \alpha_i\ge 1} \, \left|\bigwedge_{i=1}^{n-1}(z_i^{\alpha_i},w_i) \right| \le c(n)\varepsilon,
\end{align*}
where we used \eqref{eq2} and the fact that $|(v_i,w_i)|=1$ and $|(w_i,w_i)|\le 2$. We deduce that 
$$
 |\varphi(x+\varepsilon u,u)|\, \left|\left( \rmwedge_{n-1}\textrm{ap}\,DF_\varepsilon (x,u)-\rmwedge_{n-1}\textrm{id}\right) a_K(x,u)\right|
\le C_1(K,\varphi)\varepsilon.
$$
Furthermore, again by \eqref{eq3} we get
$$
\left|\left\langle a_K(x,u),\varphi(x+\varepsilon u,u)-\varphi(x,u)\right\rangle\right|
\le |\varphi(x+\varepsilon u,u)-\varphi(x,u)|\le C_2(K,\varphi)\, \varepsilon .
$$
Thus we conclude that 
$$
\left|T_{K_\varepsilon}(\varphi)-T_K(\varphi)\right|\le C_3(K,\varphi)\,\varepsilon\, \mathcal{H}^{n-1}(\Nor K).
$$
Since $F:\partial K_1\to\Nor K$, $z\mapsto (p(K,z),z-p(K,z))$, is Lipschitz with Lipschitz constant bounded from above by 3, the assertion follows.
\end{proof}

A convex body $K\in\Kn$ is said to be $\varepsilon$-smooth (for some $\varepsilon>0$), if $K=K'+\varepsilon B^n$ for some $K'\in \Kn$. For a nonempty set $A\subset\R^n$, we define the distance from $A$ to $x\in\R^n$ by $d(A,x):= \inf\{|a-x|:a\in A\}$. The signed distance is defined by $d^*(A,x):=d(A,x)-d(\R^n\setminus A,x)$, $x\in\R^n$, if $A,\R^n \setminus A\neq \emptyset$. If $K$ is $\varepsilon$-smooth, then $\partial K$ has positive reach. More precisely, if $x\in \R^n$ satisfies $d(\partial K,x)<\varepsilon$, then there is a unique point $p(\partial K,x)\in\partial K$ such that $d(\partial K,x)=|p(\partial K,x)-x|$. 

\begin{lemma}\label{Lemma4.4}  Let $\varepsilon\in(0,1)$ and $\delta\in(0,\varepsilon/2)$. Let $K,L\in\Kn$ be $\varepsilon$-smooth and assume that $d_H(K,L)\le\delta$. Then
$$
p:\partial K\to \partial L,\qquad x\mapsto p(\partial L,x),
$$
is well-defined, bijective, bi-Lipschitz with $\textrm{\rm Lip}(p)\le \varepsilon/(\varepsilon-\delta)$, and $|p(x)-x|\le \delta$ for all $x\in\partial K$. 
\end{lemma}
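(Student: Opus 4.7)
The plan is to establish the lemma's four claims separately, with injectivity the main obstacle since it is exactly there that the strict inequality $\delta<\varepsilon/2$ enters. First, $p$ is well-defined and $|p(x)-x|\le\delta$ both reduce to the estimate $d(\partial L,x)\le\delta$ for every $x\in\partial K$, because $L$ is $\varepsilon$-smooth and $\delta<\varepsilon$, so the metric projection onto $\partial L$ is unique on the $\delta$-tubular neighbourhood. The case $x\notin L$ is immediate from $d(L,x)\le d_H(K,L)\le\delta$ together with the fact that the nearest point to $x$ in $L$ lies on $\partial L$. For $x\in L$, I travel from $x$ along the outer unit normal $u_K(x)$ of $K$ at $x$ (well-defined since $K$ is $\varepsilon$-smooth): the supporting hyperplane of $K$ at $x$ gives $d(K,x+tu_K(x))=t$ for $t>0$, so if $t^*>0$ is the first time the ray meets $\partial L$, then $x+t^*u_K(x)\in L\subset K+\delta B^n$ forces $t^*\le\delta$. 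The Lipschitz bound $\mathrm{Lip}(p)\le\varepsilon/(\varepsilon-\delta)$ is the classical Federer estimate for the metric projection onto a closed set of reach $r$, which is Lipschitz with constant $r/(r-s)$ on its $s$-parallel set whenever $s<r$; apply with $r=\varepsilon$ and $s=\delta$.

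For bijectivity I analyse each normal fibre of $\partial L$ separately. Writing $K=K'+\varepsilon B^n$ and $L=L'+\varepsilon B^n$, cancellation in the Minkowski sum yields $d_H(K',L')\le\delta$, hence $L\ominus\delta B^n\subset K\subset L+\delta B^n$. Fix $y\in\partial L$ with outer unit normal $v:=u_L(y)$ and set $y':=y-\varepsilon v$, which is a supporting point of $L'$ in direction $v$, so $L'\subset\{z:v\bullet(z-y')\le 0\}$. Then $y-\delta v=y'+(\varepsilon-\delta)v\in L\ominus\delta B^n\subset K$, while $y+\delta v$ lies on $\partial(L+\delta B^n)$ at distance $\delta$ from $L$ and therefore cannot belong to $\mathrm{int}\,K$. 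Writing the chord $K\cap(y+\R v)=[y+t_av,y+t_bv]$ with $t_a\le t_b$, these constraints force $t_a\le-\delta\le t_b\le\delta$, so the point $y+t_bv\in\partial K$ projects to $y$ under $p$, giving surjectivity.

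Injectivity is the decisive step. Suppose $x_1=y+t_1v$ and $x_2=y+t_2v$ are distinct points of $\partial K$ with $p(x_1)=p(x_2)=y$ and $t_1>t_2$; then $\{x_1,x_2\}$ must be the set of chord endpoints, so by the preceding paragraph $t_2=-\delta$ and $x_2=y-\delta v$. As the lower chord endpoint, $\alpha_2:=v\bullet u_K(x_2)\le 0$. On the other hand, $c_2:=x_2-\varepsilon u_K(x_2)$ lies in $K'\subset L'+\delta B^n$, so testing against the supporting hyperplane of $L'$ at $y'$ gives $v\bullet c_2\le v\bullet y'+\delta$; expanding $v\bullet c_2=v\bullet y'+\varepsilon-\delta-\varepsilon\alpha_2$ converts this into $\alpha_2\ge 1-2\delta/\varepsilon$, which is strictly positive by $\delta<\varepsilon/2$ and contradicts $\alpha_2\le 0$. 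This contradiction yields injectivity, and the symmetric application of the Federer estimate to $\partial K$ supplies the Lipschitz inverse, completing the bi-Lipschitz claim.
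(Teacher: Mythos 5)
Your route to well-definedness, surjectivity and injectivity is genuinely different from the paper's and, apart from one imprecision, sound. Where the paper establishes the containment $\{x\in L:d(\partial L,x)\ge\delta\}\subset K$ by a separation argument and then reasons about inscribed balls, you pass to the inner bodies $K',L'$ with $K=K'+\varepsilon B^n$, $L=L'+\varepsilon B^n$, use the cancellation law to get $d_H(K',L')\le\delta$, and argue with the chord $K\cap(y+\R v)$ and supporting hyperplanes of $L'$. Your injectivity argument (testing $c_2=x_2-\varepsilon u_K(x_2)\in K'\subset L'+\delta B^n$ against the support hyperplane at $y'$, yielding $v\bullet u_K(x_2)\ge 1-2\delta/\varepsilon>0$) is a clean, quantitative alternative to the paper's more qualitative reasoning. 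One small imprecision: the claim that $\{x_1,x_2\}$ ``must be the set of chord endpoints'' presupposes that the normal line $y+\R v$ meets $\mathrm{int}\,K$; if the line were tangent to $K$ the whole chord would lie in $\partial K$. This case does not actually cause trouble (your inequality $v\bullet u_K(y-\delta v)\ge 1-2\delta/\varepsilon>0$ still contradicts $v\bullet u_K(y-\delta v)\le 0$, which holds for any point of the chord that is not its upper endpoint), but you should say a word to rule it out rather than pass over it.

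There is, however, a genuine gap in the bi-Lipschitz claim. You assert that ``the symmetric application of the Federer estimate to $\partial K$ supplies the Lipschitz inverse.'' Applying Federer's estimate with the roles of $K$ and $L$ exchanged shows that $\tilde p:\partial L\to\partial K$, $y\mapsto p(\partial K,y)$, is Lipschitz, but $\tilde p$ is \emph{not} the inverse of $p$. Indeed, $p^{-1}(y)$ is the unique point of $\partial K$ lying on the normal line to $\partial L$ at $y$, whereas $p(\partial K,y)$ is the foot of the perpendicular from $y$ to $\partial K$; these coincide only if $u_L(y)$ and $u_K(p^{-1}(y))$ are parallel. A concrete counterexample: take $K=B^n$ and $L=\delta e_2+B^n$ (so $\varepsilon=1$, $d_H(K,L)=\delta$), and $y=\delta e_2+e_1\in\partial L$. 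Then $p^{-1}(y)=(\sqrt{1-\delta^2},\delta,0,\dots,0)$, while $p(\partial K,y)=(1,\delta,0,\dots,0)/\sqrt{1+\delta^2}$, and these differ for every $\delta>0$. So Lipschitz continuity of $p^{-1}$ does not follow from Federer's estimate alone. The paper closes this by writing $p^{-1}(z)=z-d^*(\partial K,z)\,u_L(z)$ and invoking the Lipschitz continuity of the signed distance together with Lemma 3.3 (the spherical image map $u_L$ is Lipschitz with constant $1/\varepsilon$); you would need this, or some equivalent argument controlling the parameter $s(y)$ in $p^{-1}(y)=y+s(y)u_L(y)$, to finish the bi-Lipschitz claim.
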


\begin{proof} Since $d_H(K,L)\le \delta$, we have $K\subset L+\delta B^n$, $L\subset K+\delta B^n$, and a separation 
argument yields that
\begin{equation}\label{eqneighbour}
\{x\in L:d(\partial L,x)\ge \delta\}\subset K.
\end{equation}
This shows that $\partial K\subset \{z\in\R^n:d(\partial L,z)\le \delta\}$ 
and therefore the map $p$ is well-defined on $\partial K$ and $|p(x)-x|\le \delta$ for all $x\in \partial K$.  By 
\cite[Theorem 4.8 (8)]{Fed59} it follows that $\textrm{Lip}(p)\le \varepsilon/(\varepsilon-\delta)$. Since $L$ is $\varepsilon$-smooth, 
for $y\in \partial L$ there is a unique exterior unit normal of $L$ at $y$, which we denote by $u=u(L,y)=:u_L(y)$. Put $y_0:=y-\varepsilon u$ and note that  $y_0+(\varepsilon-\delta)B^n\subset K\cap L$ by \eqref{eqneighbour}. Then  $x\in\partial K$ is uniquely determined by the condition 
$\{x\}=\left(y_0+[0,\infty)u\right)\cap \partial K$ and satisfies $p(x)=y$. This shows that $p$ is surjective. 

Now let $x_1,x_2\in\partial K$ satisfy $p(x_1)=p(x_2)=:p_0\in\partial L$. Since there is a ball $B$ of radius $\varepsilon$ with $p_0\in B\subset L$, the points $x_1,x_2\in\partial K$ are on the line through $p_0$ and the center of $B$. By \eqref{eqneighbour}, they cannot be on different sides of $p_0$, hence $x_1=x_2$. This shows that the map $p$ is also injective. If $d^*(\partial K,\cdot):\R^n\to\partial K$ denotes the signed distance function of $\partial K$, then $q:\partial L\to\partial K$, $z\mapsto z-d^*(\partial K,z)u_L(z)$, is the inverse of $p$. Since the signed distance function is Lipschitz, Lemma 
\ref{Lespaet} shows that $q$ is Lipschitz as well.
\end{proof}

The following lemma provides a simple argument for the fact that the spherical image map of an $\varepsilon$-smooth 
convex body is Lipschitz with Lipschitz constant at most $1/\varepsilon$. A less explicit assertion is contained 
in \cite[Hilfssatz 1]{Lei86}.

\begin{lemma}\label{Lespaet}
Let $K\in \Kn$ be $\varepsilon$-smooth, $\varepsilon>0$. Then the spherical image map 
$u_K$  is Lipschitz with Lipschitz constant 
$1/\varepsilon$.
\end{lemma}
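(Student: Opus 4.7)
The plan is to exploit the defining decomposition $K = K' + \varepsilon B^n$ (with $K' \in \Kn$) together with the monotonicity of outer normals on $K'$. The key geometric observation is that for every $x \in \partial K$ one has $d(K',x) = \varepsilon$, so the metric projection $p(K',x)$ is a single point at distance exactly $\varepsilon$ from $x$, and
$$
u_K(x) = \frac{x - p(K',x)}{\varepsilon}.
$$
Moreover, by the projection characterization, $u_K(x)$ also lies in the outer normal cone of $K'$ at $p(K',x)$. Both facts are immediate from the additive structure of $K$.

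With this identification I would write, for any $x,y \in \partial K$,
$$
x - y = \bigl(p(K',x) - p(K',y)\bigr) + \varepsilon \bigl(u_K(x) - u_K(y)\bigr),
$$
and expand $|x-y|^2$. The heart of the argument is the cross term. Using that $u_K(x)$ attains the support function of $K'$ at $p(K',x)$, i.e.\ $h_{K'}(u_K(x)) = p(K',x) \bullet u_K(x) \geq p(K',y) \bullet u_K(x)$, and the symmetric inequality with $x,y$ swapped, addition yields the monotonicity
$$
\bigl(p(K',x) - p(K',y)\bigr) \bullet \bigl(u_K(x) - u_K(y)\bigr) \geq 0.
$$
Since $|p(K',x) - p(K',y)|^2 \geq 0$ as well, the expansion collapses to
$$
|x-y|^2 \geq \varepsilon^2\, |u_K(x) - u_K(y)|^2,
$$
which is exactly the asserted Lipschitz estimate.

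There is essentially no serious obstacle: the whole argument reduces to the expansion of a single squared norm once the basic properties of $p(K',\cdot)$ and $u_K$ have been recorded. The only step requiring care is verifying that $(x - p(K',x))/\varepsilon$ is simultaneously the outer unit normal of $K$ at $x$ and a member of the outer normal cone of $K'$ at $p(K',x)$; but both facts are standard consequences of the projection characterization applied to the inner parallel body $K'$.
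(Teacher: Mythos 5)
Your proof is correct and is essentially the paper's argument in a slightly different packaging: the paper applies the supporting-hyperplane inequality of $K$ at $y$ to the point $x-\varepsilon u_K(x)+\varepsilon u_K(y)$, which lies in the inner ball $x-\varepsilon u_K(x)+\varepsilon B^n\subset K$ (this inclusion is exactly your statement $p(K',x)\in K'$), symmetrizes in $x,y$, and closes with Cauchy--Schwarz, whereas you make the inner body $K'$ and the projection $p(K',\cdot)$ explicit, obtain the same cross-term sign $(p(K',x)-p(K',y))\bullet(u_K(x)-u_K(y))\ge 0$ from the support-function characterization, and close by expanding $|x-y|^2$ instead. The one step worth spelling out carefully, as you note, is that $(x-p(K',x))/\varepsilon=u_K(x)$; this holds because $p(K',x)+\varepsilon B^n\subset K$ is internally tangent to $\partial K$ at $x$, so the supporting hyperplane of $K$ at $x$ (unique since $K$ is $\varepsilon$-smooth) must also support the ball, forcing the normal to agree.
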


\begin{proof} Let $x,y\in \partial K$, and define $u:=u_K(x)$, $v:=u_K(y)$. Then  
$$
x-\varepsilon u+\varepsilon v\in x-\varepsilon u+ \varepsilon B^n \subset K,
$$
and hence $( x-\varepsilon u+\varepsilon v-y)\bullet v \leq 0$. This 
yields
\begin{equation}\label{Gluno}
\varepsilon ( v-u)\bullet v\leq  (y-x)\bullet v.
\end{equation}
By symmetry, we also have $\varepsilon ( u-v)\bullet u\leq  (x-y)\bullet u$, and therefore
\begin{equation}\label{Gldue}
\varepsilon ( v-u)\bullet(-u)\leq ( y-x)\bullet (-u).
\end{equation}
Addition of  (\ref{Gluno}) and (\ref{Gldue}) yields
$$
\varepsilon\,|v-u|^{2}\leq ( y-x)\bullet(v-u)\leq |y-x|\,|v-u|,
$$
which implies the assertion. 
\end{proof}

\begin{lemma}\label{Lemma4.5}
Let $\varepsilon\in(0,1)$ and $\delta\in(0,\varepsilon/2)$. Let $K,L\in\Kn$ be $\varepsilon$-smooth and assume that $d_H(K,L)\le\delta$. Put $p(x):=p(\partial L,x)$ for $x\in\partial K$. Then
$$
G:\Nor K\to \Nor L,\qquad (x,u)\mapsto (p(x),u_L(p(x))),
$$
is  bijective, bi-Lipschitz with $\textrm{\rm Lip}(G)\le 2/(\varepsilon-\delta)\le 4/\varepsilon$, and $|G(x,u)-(x,u)|\le \delta+2\sqrt{\delta/\varepsilon}$ for all $(x,u)\in\Nor K$. 
\end{lemma}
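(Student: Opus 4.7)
The strategy is to decompose $G$ through the bi-Lipschitz map $p\colon\partial K\to\partial L$ from Lemma~\ref{Lemma4.4} and the spherical image map $u_L$, whose Lipschitz constant is controlled by Lemma~\ref{Lespaet}. Since $K$ and $L$ are $\varepsilon$-smooth, every boundary point has a unique outer unit normal, so $\Nor K=\{(x,u_K(x)):x\in\partial K\}$ and similarly for $L$; in particular $G(x,u_K(x))=(p(x),u_L(p(x)))$. Bijectivity of $G$ is then immediate from the bijectivity of $p$, the inverse being given by $(y,v)\mapsto(p(\partial K,y),u_K(p(\partial K,y)))$.

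For the Lipschitz bound I would apply the triangle inequality componentwise and combine $\textrm{Lip}(p)\le\varepsilon/(\varepsilon-\delta)$ (Lemma~\ref{Lemma4.4}) with $\textrm{Lip}(u_L)\le 1/\varepsilon$ (Lemma~\ref{Lespaet}) to obtain
$$|G(x_1,u_1)-G(x_2,u_2)|\le|p(x_1)-p(x_2)|+|u_L(p(x_1))-u_L(p(x_2))|\le\frac{1+\varepsilon}{\varepsilon-\delta}|x_1-x_2|.$$
Since $\varepsilon<1$ and $|x_1-x_2|\le|(x_1,u_1)-(x_2,u_2)|$, this yields $\textrm{Lip}(G)\le 2/(\varepsilon-\delta)\le 4/\varepsilon$, the latter using $\delta<\varepsilon/2$; the analogous estimate for $G^{-1}$ then follows by exchanging the roles of $K$ and $L$.

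The pointwise distance bound is the main obstacle and requires a geometric estimate on the angle between $u=u_K(x)$ and $v=u_L(y)$, where $y=p(x)$. The first coordinate is controlled directly by Lemma~\ref{Lemma4.4}: $|y-x|\le\delta$. For the second, $\varepsilon$-smoothness of $K$ yields $x-\varepsilon u+\varepsilon B^n\subset K$, and the version of \eqref{eqneighbour} with $K$ and $L$ exchanged, applied to the concentric sub-ball, gives $x-\varepsilon u+(\varepsilon-\delta)B^n\subset L$ (every point of this sub-ball lies in $K$ at distance at least $\delta$ from $\partial K$). Since this ball is contained in the supporting halfspace of $L$ at $y$ with outer normal $v$, evaluating at the extreme point in direction $v$ yields
$$\bigl((x-\varepsilon u)+(\varepsilon-\delta)v-y\bigr)\bullet v\le 0,$$
which rearranges to $\varepsilon(u\bullet v)\ge(\varepsilon-\delta)+(x-y)\bullet v\ge\varepsilon-2\delta$, using $(x-y)\bullet v\ge-|y-x|\ge-\delta$. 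Hence $|u-v|^2=2(1-u\bullet v)\le 4\delta/\varepsilon$, so $|u-v|\le 2\sqrt{\delta/\varepsilon}$, and combining with $|y-x|\le\delta$ via the triangle inequality in $\R^{2n}$ gives the required $|G(x,u)-(x,u)|\le\delta+2\sqrt{\delta/\varepsilon}$.
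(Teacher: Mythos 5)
Your decomposition $G=(p,u_L\circ p)$, the Lipschitz estimate assembled from Lemmas \ref{Lemma4.4} and \ref{Lespaet}, and the reduction of the pointwise bound to a lower bound on $u_K(x)\bullet u_L(p(x))$ all match the paper's argument. Your derivation of that lower bound is a pleasant streamlining: starting from the interior ball $x-\varepsilon u+\varepsilon B^n\subset K$ given by $\varepsilon$-smoothness of $K$, passing to the concentric sub-ball of radius $\varepsilon-\delta$ inside $L$ via \eqref{eqneighbour}, and invoking the support inequality of $L$ at $p(x)$ yields $u\bullet v\ge 1-2\delta/\varepsilon$ in a single computation, uniformly in $x$. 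The paper instead distinguishes $x\in L$ from $x\notin L$ and uses a cone inscribed in $K$ whose axis is $u_L(p(x))$; both routes are correct, but yours avoids the case split.

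There is, however, a gap in your treatment of the bi-Lipschitz claim. You assert that $G^{-1}(y,v)=(p(\partial K,y),u_K(p(\partial K,y)))$ and that the Lipschitz estimate for $G^{-1}$ follows ``by exchanging the roles of $K$ and $L$.'' This map is a perfectly good bijection $\Nor L\to\Nor K$, but it is not $G^{-1}$: the two nearest-point maps do not compose to the identity. Concretely, with $K=B^n$ and $L=B^n+c$ for a small $c\neq 0$ and $x\in\partial K$ not parallel to $c$, one has $p(\partial L,x)=c+(x-c)/|x-c|$, and projecting that point back onto the unit sphere does not recover $x$. So the swap argument does not prove that $G^{-1}$ is Lipschitz. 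The correct route is the one the paper takes: $G^{-1}(y,v)=(p^{-1}(y),u_K(p^{-1}(y)))$, where $p^{-1}$ is Lipschitz because Lemma \ref{Lemma4.4} already asserts $p$ is bi-Lipschitz (and in its proof gives an explicit Lipschitz formula for $p^{-1}$, namely $z\mapsto z-d^*(\partial K,z)u_L(z)$); composing with the Lipschitz map $u_K$ from Lemma \ref{Lespaet} finishes the argument. You should replace the $K$--$L$ swap by this appeal to Lemma \ref{Lemma4.4}.
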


\begin{proof} It follows from Lemma \ref{Lemma4.4} that $G$ is bijective.  Then, for $(x,u),(y,v)\in\Nor K$ we get
\begin{align*}
|G(x,u)-G(y,v)|&\le |p(x)-p(y)|+|u_L(p(x))-u_L(p(y))|\\
&\le \frac{\varepsilon}{\varepsilon-\delta}|x-y|+\frac{1}{\varepsilon}\frac{\varepsilon}{\varepsilon-\delta}|x-y|\le \frac{\varepsilon+1}{\varepsilon-\delta}
|x-y|\\
&\le \frac{2}{\varepsilon-\delta}|(x,u)-(y,v)|,
\end{align*}
where we have used again Lemma \ref{Lemma4.4} and Lemma \ref{Lespaet}. 
Let $x\in\partial K$ and $z:=p(x)\in\partial L$. We want to bound $ u_L(z)\bullet u_K(x)$ 
from below. If $x\notin L$, then $\textrm{conv}(\{x\}\cup(z-\varepsilon u_L(z)+(\varepsilon-\delta)B^n))\subset K$, 
and therefore 
$$
 u_L(z)\bullet u_K(x) \ge \frac{\varepsilon-\delta}{\varepsilon+\delta}\ge 1-\frac{2\delta}{\varepsilon}.
$$
If $x\in L$, then in a similar way we obtain
$$
 u_L(z)\bullet u_K(x) \ge \frac{\varepsilon-\delta}{\varepsilon}\ge 1-\frac{\delta}{\varepsilon},
$$
hence 
\begin{equation}\label{angle}
 u_L(z)\bullet u_K(x) \ge  1-\frac{2\delta}{\varepsilon}
\end{equation}
holds for all $x\in\partial K$. 
Thus 
$$
|u_L(z)-u_K(x)|\le 2\sqrt{\delta/\varepsilon},
$$
which finally implies that, for all $(x,u)\in \Nor K$, 
$$
|G(x,u)-(x,u)|\le |p(x)-x|+|u_L(p(x))-u_K(x)|\le \delta+2\sqrt{\delta/\varepsilon}.
$$
Since $G^{-1}:\Nor L\to\Nor K$ is given by $G^{-1}(z,u)=(q(z),u_K(q(z)))$, 
it follows that also $G^{-1}$ is Lipschitz.
\end{proof}

Next we show that, under the assumptions of the subsequent lemma, $\rmwedge_{n-1}DG(x,u)$ is an orientation preserving map from the approximate tangent space of $\Nor K$ to the approximate tangent space of $\Nor L$. It seems that a corresponding fact is not provided in the proofs of related assertions in the literature.

\begin{lemma}\label{orient}
Let $\varepsilon\in(0,1)$ and $\delta\in(0,\varepsilon/(4n))$. Let $K,L\in\Kn$ be $\varepsilon$-smooth 
and assume that $d_H(K,L)\le\delta$. Then, for $\mathcal{H}^{n-1}$-almost all $(x,u)\in \Nor K$, the $(n-1)$-vector 
$\rmwedge_{n-1}DG(x,u) a_K(x,u)\in \textrm{\rm Tan}^{n-1}(\mathcal{H}^{n-1}\fed \Nor L,G(x,u))$ has the same orientation as $a_L(G(x,u))$.
\end{lemma}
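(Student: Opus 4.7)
My plan is to fix $(x,u)\in\Nor K$ at a point where the approximate tangent space exists and carries the positively oriented orthonormal basis $a_i=(\alpha_i b_i,\sqrt{1-\alpha_i^2}\,b_i)$ of Section \ref{sec2}, and to verify directly that, for any fixed $\varrho>0$,
\begin{equation*}
\left\langle\rmwedge_{n-1}(\Pi_1+\varrho\Pi_2)\bigl(\rmwedge_{n-1}DG(x,u)\,a_K(x,u)\bigr)\wedge v,\Omega_n\right\rangle>0,
\end{equation*}
where $v=u_L(p(x))$. Since $G$ is bi-Lipschitz by Lemma \ref{Lemma4.5}, $DG(x,u)$ is bijective between the approximate tangent spaces almost everywhere, so $\rmwedge_{n-1}DG(x,u)\,a_K(x,u)=\lambda\,a_L(G(x,u))$ for some nonzero $\lambda$; combining the displayed positivity with (\ref{orientations}) applied to $a_L$ at $G(x,u)$ will then force $\lambda>0$, which is the assertion.

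Because $G(x,u)=(p(x),u_L(p(x)))$ depends only on the first argument, $DG(x,u)\,a_i=\alpha_i(Dp(x)b_i,\,Du_L(p(x))Dp(x)b_i)$, so $(\Pi_1+\varrho\Pi_2)DG(x,u)\,a_i=\alpha_i Mb_i$ with $M:=(I+\varrho Du_L(p(x)))Dp(x)$. The target pairing becomes
\begin{equation*}
\Bigl(\prod_{i=1}^{n-1}\alpha_i\Bigr)\,\langle Mb_1\wedge\cdots\wedge Mb_{n-1}\wedge v,\Omega_n\rangle,
\end{equation*}
and $\prod_i\alpha_i>0$ a.e.\ because $K$ is $\varepsilon$-smooth: the principal curvatures of $\partial K$ are bounded by $1/\varepsilon$, which forces $\alpha_i\ge\varepsilon/\sqrt{1+\varepsilon^2}$.

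The key analytic input is the explicit form of $Dp(x)$: differentiating the identity $p(x)=x-d^*(x)u_L(p(x))$, where $d^*(x)$ denotes the signed distance of $x$ to $\partial L$ (so $|d^*(x)|\le\delta$), and using $\nabla d^*(x)=v$, one obtains $Dp(x)=(I+d^*(x)Du_L(p(x)))^{-1}P_{v^\perp}$. Set $A:=(I+\varrho Du_L(p(x)))(I+d^*(x)Du_L(p(x)))^{-1}$. Since $Du_L(p(x))$ annihilates $v$ and restricts on $v^\perp$ to the Weingarten map $W_L$ of $L$ at $p(x)$ (with eigenvalues in $[0,1/\varepsilon]$), $A$ preserves $v^\perp$ with $\det A|_{v^\perp}=\prod_i(1+\varrho\kappa_i^L)/(1+d^*\kappa_i^L)>0$, the positivity using $\varrho,\kappa_i^L\ge 0$ and $|d^*\kappa_i^L|\le\delta/\varepsilon<1/(4n)<1$.

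Finally $M=A\circ P_{v^\perp}$, and $\bigwedge_{n-1}v^\perp$ is one-dimensional, so $Mb_1\wedge\cdots\wedge Mb_{n-1}=(\det A|_{v^\perp})\,P_{v^\perp}b_1\wedge\cdots\wedge P_{v^\perp}b_{n-1}$. Writing $P_{v^\perp}b_i=b_i-(b_i\bullet v)v$ and using $v\wedge v=0$, every term with a $v$-factor vanishes upon wedging with the final $v$, leaving $b_1\wedge\cdots\wedge b_{n-1}\wedge v=(u\bullet v)\Omega_n$, the last equality coming from $(b_1,\ldots,b_{n-1},u)$ being positively oriented. The estimate (\ref{angle}) in Lemma \ref{Lemma4.5} yields $u\bullet v\ge 1-2\delta/\varepsilon>1-1/(2n)>0$, so the required pairing equals $(\prod_i\alpha_i)(\det A|_{v^\perp})(u\bullet v)>0$. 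I expect the derivation of the formula for $Dp(x)$ (which rests on $\nabla d^*=v$ for the signed distance to the $C^{1,1}$ hypersurface $\partial L$) to be the main technical step; once it is in hand, the remainder is linear algebra with manifestly positive factors.
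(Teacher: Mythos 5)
Your argument is correct, but it follows a genuinely different route from the paper's proof. The paper reduces the claim to showing $\Delta:=\Omega_n(Dp(x)(b_1),\ldots,Dp(x)(b_{n-1}),\bar u)>0$, which it factors as $\det(B)\prod_i\tau_i$ with $B_{ij}=b_i\bullet\bar b_j$ and $\tau_i$ the eigenvalues of $Dp(x)$ along principal directions of $\partial L$; it proves $\det(B)>0$ by a small determinant inequality built on the angle estimate \eqref{angle}, proves $\tau_i>0$ via the curvature bound $|k_i|\le 1/\varepsilon$ for $\varepsilon$-smooth bodies, and treats $x\in\partial L$ as a separate case. You instead verify the defining condition \eqref{orientations} head-on: differentiating $p(x)=x-d^*(x)u_L(p(x))$ and using $\nabla d^*=v$ yields the closed formula $Dp(x)=(I+d^*(x)Du_L(p(x)))^{-1}P_{v^\perp}$, so the orientation pairing factors as $\bigl(\prod_i\alpha_i\bigr)\bigl(\det A|_{v^\perp}\bigr)(u\bullet v)$, each factor manifestly positive. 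Your route buys two things: it handles $x\in\partial L$ and $x\notin\partial L$ uniformly (your formula survives $d^*=0$, whereas the paper's expression involves the undefined direction $(x-\bar x)/|x-\bar x|$ there), and it makes explicit the positivity $\prod_i\alpha_i>0$, which the paper only delegates to ``the discussion at the end of Section 2.'' The one point you should spell out, as you yourself flag, is the $\mathcal{H}^{n-1}$-a.e.\ validity of the differentiation along $\partial K$: you must justify that $Du_L(p(x))$ exists and that $d^*$ is twice differentiable at $\mathcal{H}^{n-1}$-a.e.\ $x\in\partial K$, which follows from the Lipschitz property of $u_L$ (Lemma \ref{Lespaet}) and the $C^{1,1}$ regularity of $d^*$ in the $\varepsilon$-tube about $\partial L$, combined with the bi-Lipschitz property of $p$ (Lemma \ref{Lemma4.4}) to pull back the exceptional null sets from $\partial L$ to $\partial K$.
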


\begin{proof} 
Let $x\in\partial K$, $u:=u_K(x)$, and $\bar{x}:=p(x)$, hence $d(\partial L,x)=|x-\bar x|$. 
The orientation of $\textrm{Tan}^{n-1}(\partial K,x)$ is determined by an arbitrary orthonormal basis $b_1(x),\ldots,b_{n-1}(x)$ of $u^\perp$ with 
$\Omega_n(b_1(x),\ldots,b_{n-1}(x),u)=1$. Similarly, any orthonormal basis $\bar b_1(\bar x),\ldots,\bar b_{n-1}(\bar x),\bar u$ with $\bar u:=u_L(p(x))$ determines the orientation of 
$\textrm{Tan}^{n-1}(\partial L,p(x))$. Since $G$ is bi-Lipschitz, we can assume that $(x,u)\in\Nor K$ is such that all differentials exist that are encountered in the proof. Moreover, we can also assume that $\rmwedge_{n-1}DG(x,u) a_K(x,u)$ spans $ \textrm{\rm Tan}^{n-1}(\mathcal{H}^{n-1}\fed \Nor L,G(x,u))$, where we write again $G$ for a Lipschitz extension of the given map $G$ to $\R^{2n}$. In the following, we put $b_i:=b_i(x)$ and $\bar b_i:=\bar b_i(\bar x)$ for $i=1,\ldots,n-1$.

The differentials of the maps $\Nor K\to\partial K$, $(x,u)\mapsto x$, and $\partial L\to\Nor L$, $z\mapsto (z,u_L(z))$, are  orientation preserving, which follows for instance from the discussion at the end of Section 2. Hence, it remains to 
be shown that the differential of $p:\partial K\to\partial L$, $x\mapsto p(x)$, is orientation preserving, that is, 
$$
\Delta:=\Omega_n(Dp(x)(b_1),\ldots,Dp(x)(b_{n-1}),\bar u)>0.
$$

First, we assume that $x\neq \bar x$, that is, $x\notin \partial L$.
Since $Dp(x)(\bar u)=o$, we get
$$
Dp(x)(b_i)=\sum_{j=1}^{n-1}  b_i\bullet \bar b_j \, Dp(x)(\bar b_j),
$$
and thus 
$$
\Delta=\det(B)\,\Omega_n(Dp(x)(\bar b_1),\ldots,Dp(x)(\bar b_{n-1}),\bar u),
$$
where $B=(B_{ij})$ with $B_{ij}:=  b_i\bullet \bar b_j $ for $i,j\in\{1,\ldots,n-1\}$. We choose $\bar b_1,\ldots,\bar b_{n-1}$ as  principal directions of curvature of $\partial L$ at $\bar x=p(x)$. Then $Dp(x)(\bar b_i)=\tau_i\, \bar b_i$ with 
$$
\tau_i:=1-d(\partial L,x)k_i\left(\partial L,\bar x,\frac{x-\bar x}{|x-\bar x|}\right)>0,
$$ 
for $i=1,\ldots,n-1$. Here we use that $L$ is $\varepsilon$-smooth,  hence $\partial L$ has positive reach, $d(\partial L,x)<\varepsilon$ and 
$$
\left|k_i\left(\partial L,\bar x,\frac{x-\bar x}{|x-\bar x|}\right)\right|\le 1/\varepsilon.
$$  
Hence it follows that $\Delta>0$ if we can show that $\det(B)>0$. Let $\tilde B=(\tilde B_{ij})$ be defined by $\tilde B_{ij}:=b_{ij}$, $\tilde B_{in}:=  b_i \bullet \bar u $, 
$\tilde B_{nj}:=  u\bullet \bar b_j $, and $\tilde B_{nn}:=  u\bullet \bar u $, for $i,j\in\{1,\ldots,n-1\}$.  
Then
\begin{align*}
1&=\Omega_n(b_1,\ldots,b_{n-1},u)\, \Omega_n(\bar b_1,\ldots,\bar b_{n-1},\bar u)=\det(\tilde B)\\
&\le  u\bullet \bar u\, \det(B)+\sum_{i=1}^{n-1}|  b_i\bullet \bar u |\cdot 1 
\le   u \bullet \bar u  \det(B)+\sqrt{n-1}\sqrt{1-  (u\bullet \bar u)  ^2}.
\end{align*}
From \eqref{angle} and our assumptions, we get $u\bullet \bar u\ge 1-(2\delta)/\varepsilon\ge 1-1/(2n)$, 
and therefore 
$$
\sqrt{1-  (u\bullet \bar u)  ^2}\le \sqrt{1/n}.
$$
Thus 
$$
1<  u\bullet \bar u  \,\det(B) +1,
$$
which implies that $\det(B)>0$. 

Finally, we have to consider the case where $x\in\partial L$. For $\mathcal{H}^{n-1}$-almost all $x\in \partial K\cap \partial L$, we have 
$\textrm{\rm Tan}^{n-1}(\mathcal{H}^{n-1}\fed (\partial K\cap\partial L),x)=u^\perp$ and  $Dp(x)=\textrm{id}_{u^\perp}$, since $p(z)=z$ for all $z\in \partial K\cap\partial L$. Hence, 
$\Delta=\Omega_n(b_1,\ldots,b_{n-1},\bar u)=u\bullet \bar u>0$.
\end{proof}

\begin{lemma}\label{Lemma4.6} 
Let $\varepsilon\in(0,1)$ and $\delta\in(0,\varepsilon/(4n))$. Let $K,L\in\Kn$ be $\varepsilon$-smooth 
and assume that $d_H(K,L)\le\delta$. Let $M\subset\R^{2n}$ be a compact convex set containing $K_{1-\varepsilon}\times \Sn$ and $L_{1-\varepsilon}\times \Sn$ in its interior. Then
$$
|T_K(\varphi)-T_L(\varphi)|\le C(M,\varphi)\,\left(\frac{4}{\varepsilon}\right)^{n-1}\,\left(\delta+2\sqrt{\delta/\varepsilon}\right)
$$
for $\varphi\in\mathcal{E}^{n-1}(\R^{2n})$, where $C(M,\varphi)$ is a constant which depends 
on the sup-norm and the Lipschitz constant of $\varphi$ on $M$, and on $\mathcal{H}^{n-1}(\partial K_1)$.
\end{lemma}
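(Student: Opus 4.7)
My plan is to invoke the homotopy formula for rectifiable currents, sweeping $T_K$ to $T_L$ along the linear interpolation between the identity and the bi-Lipschitz bijection $G\colon\Nor K\to\Nor L$ of Lemma~\ref{Lemma4.5}. Concretely, I set
$$
H\colon[0,1]\times\Nor K\to\R^{2n},\qquad H(t,y):=(1-t)y+tG(y),
$$
and let $P$ be the $n$-dimensional rectifiable current in $\R^{2n}$ obtained by pushing forward the product of the canonical $1$-current on $[0,1]$ with $T_K$ under $H$. Since $G$ is bijective and, by Lemma~\ref{orient}, orientation preserving on the approximate tangent $(n-1)$-vectors, the change-of-variables formula for rectifiable currents gives $G_\# T_K=T_L$. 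Because $T_K$ is a cycle, the boundary formula $\partial(f_\#T)=f_\#(\partial T)$ for Lipschitz push-forwards then yields $\partial P=G_\# T_K-T_K=T_L-T_K$.

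From this identity I will conclude, for $\varphi\in\mathcal{E}^{n-1}(\R^{2n})$,
$$
|T_L(\varphi)-T_K(\varphi)|=|(\partial P)(\varphi)|=|P(d\varphi)|\le M(P)\,\sup_M|d\varphi|,
$$
and the last factor is controlled by the Lipschitz constant of $\varphi$ on $M$. It then remains to bound $M(P)$ via the area formula: the approximate Jacobian of $H$ at $(t,y)$ equals the mass of the $n$-vector
$$
\bigl(G(y)-y\bigr)\wedge\rmwedge_{n-1}\bigl(DG_t(y)\cdot a_K(y)\bigr),\qquad G_t:=(1-t)\,\mathrm{id}+tG.
$$
Using \eqref{eq2} together with $|G(y)-y|\le\delta+2\sqrt{\delta/\varepsilon}$ and $\mathrm{Lip}(G_t)\le\max(1,\mathrm{Lip}(G))\le 4/\varepsilon$ (from Lemma~\ref{Lemma4.5}), the integrand is bounded by $(\delta+2\sqrt{\delta/\varepsilon})(4/\varepsilon)^{n-1}$. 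Integrating over $[0,1]\times\Nor K$ and substituting the estimate $\mathcal{H}^{n-1}(\Nor K)\le 3^{n-1}\mathcal{H}^{n-1}(\partial K_1)$ from the end of the proof of Lemma~\ref{Lemma4.3} delivers the asserted inequality.

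The step requiring the most care will be verifying that $H([0,1]\times\Nor K)\subset M$, so that $\sup_M|d\varphi|$ and the Lipschitz constant of $\varphi$ on $M$ are indeed the relevant quantities. This follows from convexity of $M$: the endpoint $(x,u)\in\Nor K\subset K_{1-\varepsilon}\times\Sn$ and its image $G(x,u)\in L_{1-\varepsilon}\times\Sn$ both lie in $M$, and so does the connecting segment. The remaining conceptual subtlety is to justify the identities $G_\# T_K=T_L$ and $\partial P=T_L-T_K$ even though $G$ and $H$ are only bi-Lipschitz; these rest on the standard push-forward and boundary formulas in Federer's calculus \cite{Fed69}, combined with Lemma~\ref{orient} to ensure that the orientations match so that $G_\# T_K$ equals $+T_L$ rather than $-T_L$.
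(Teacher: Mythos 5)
Your proposal is correct and takes essentially the same route as the paper: the paper also proves $T_L=G_\#T_K$ via Lemma \ref{orient} and [Fed69, Thm.\ 4.1.30], and then invokes Federer's affine homotopy estimate [Fed69, 4.1.14] to bound $\mathbf{F}_M(G_\#T_K-T_K)\le\mathbf{M}(T_K)\,\|G-\mathrm{id}\|_{\Nor K,\infty}\,\mathrm{Lip}(G)^{n-1}$, which is exactly the bound you obtain by constructing the homotopy current $P=H_\#\bigl([0,1]\times T_K\bigr)$, identifying $\partial P=T_L-T_K$, and estimating $\mathbf{M}(P)$ through the area formula. The only cosmetic difference is that you unfold the homotopy formula and the flat-seminorm step by hand rather than citing them, which is fine.
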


\begin{proof} Let $G$ be as in Lemma \ref{Lemma4.5} (or a Lipschitz extension to the whole space with the same Lipschitz constant). 
Then \cite[Theorem 4.1.30]{Fed69} implies that 
$$
T_L=G_\sharp T_K,
$$
since $\rmwedge_{n-1}DG$ preserves the orientation of the orienting $(n-1)$-vectors, by Lemma \ref{orient}. (In \cite{RZ01} a corresponding fact is stated without further comment.) Recall the definitions of the dual flat metric $\mathbf{F}_{M}$ from \cite[4.1.12]{Fed69} and of the 
mass $\mathbf{M}$ (of a current) from \cite[p.~358]{Fed69}. 
Using \cite[4.1.14]{Fed69},  $\partial T_K=0$, the fact that 
$T_K$ has compact support contained in the interior of $M$ and Lemma \ref{Lemma4.5}, we get
\begin{align*}
\mathbf{F}_{M}(T_L-T_K)&=\mathbf{F}_{M}(G_\sharp T_K-T_K)\le \mathbf{M}\,( T_K)\cdot \|G-\textrm{id}\|_{\Nor K,\infty}\cdot \left(\frac{4}{\varepsilon}\right)^{n-1}\\
&\le \mathcal{H}^{n-1}(\partial K_1)\, \left( \frac{4}{\varepsilon}\right)^{n-1}\, \left(\delta+2\sqrt{\delta/\varepsilon}\right),
\end{align*}
where $\|G-\textrm{id}\|_{\Nor K,\infty}:=\sup\{|G(x,u)-(x,u)|:(x,u)\in\Nor K\}$. The assertion now follows from the 
definition of  $\mathbf{F}_{M}$, since $\|d\varphi\|$ can be bounded in terms of the sup-norm  and the 
Lipschitz constant of $\varphi$ on $M$.
\end{proof}

Now we are in a position to complete the proof of Theorem \ref{T1}.

\begin{proof}[Proof of Theorem $\ref{T1}$] Let $\varphi\in\mathcal{E}^{n-1}(\R^{2n})$. Let $\delta:=d_H(K,L)>0$ and $\varepsilon:=\delta^{\frac{1}{2n+1}}$.  
Assume that $\delta<(4n)^{-\frac{2n+1}{2n}}$. Then  $\delta<\varepsilon/(4n)$.  Lemma \ref{Lemma4.3} implies that
\begin{align*}
\left|T_K(\varphi)-T_{K_{\varepsilon}}(\varphi)\right|&\le C(M,\varphi)\, \varepsilon,\\
\left|T_L(\varphi)-T_{L_{\varepsilon}}(\varphi)\right|&\le C(M,\varphi)\, \varepsilon.
\end{align*}
Since $K_\varepsilon$ and $L_\varepsilon$ are $\varepsilon$-smooth, $d_H(K_\varepsilon,L_\varepsilon)=\delta$, 
$(K_{\varepsilon})_{1-\varepsilon}=K$ and $(L_{\varepsilon})_{1-\varepsilon}=L$, Lemma \ref{Lemma4.6} shows that
$$
|T_{K_{\varepsilon}}(\varphi)-T_{L_{\varepsilon}}(\varphi)|\le  C(M,\varphi)\,\left(\frac{4}{\varepsilon}\right)^{n-1}\,\left(\delta+2\sqrt{\delta/\varepsilon}\right).
$$
The triangle inequality then yields
$$
|T_K(\varphi)-T_L(\varphi)|\le  C_4(M,\varphi)\,\left(\varepsilon+\frac{\delta}{\varepsilon^{n-1}}+\frac{1}{\varepsilon^{n-1}}\sqrt{\frac{\delta}{\varepsilon}}\right)
\le  C_5(M,\varphi)\, \delta^{\frac{1}{2n+1}}.
$$
If $\delta\ge (4n)^{-\frac{2n+1}{2n}}$, we simply adjust the constant.
\end{proof}

\section{Proof of Theorem \ref{T2}}\label{sec4}

In the theory of convex bodies, Federer's curvature measures are supplemented by the area measures, which are finite Borel measures on the unit sphere (and were, in fact, introduced more than 20 years earlier). The two series of measures are generalized by the support measures. We briefly recall their definition (see \cite[Chap.~4]{Sch93}). 

For $K\in\Kn$, we have already used the metric projection $p(K,\cdot)$ and the vector function $u(K,x):= (x-p(K,x))/d(K,x)$, where $d(K,x):= |x-p(K,x)|$ denotes the distance of the point $x$ from $K$. We also write $p(K,\cdot)=:p_K$, $u(K,\cdot)=:u_K$ and $d(K,\cdot)=:d_K$. For $\rho>0$, we set $K^\rho:=K_\rho\setminus K$, where $K_\rho=K+\rho B^n$ is the already defined parallel body of $K$ at distance $\rho$. The product space $\R^n\times \Sn$ is denoted by $\Sigma$. For given $K$, the map $f_\rho:K^\rho\to\Sigma$ is defined by $f_\rho(x):= (p_K(x),u_K(x))$ for $x\in K^\rho$, and $\mu_{K,\rho}=\mu_\rho(K,\cdot)$ is the image measure of $\Ha^n\fed K^\rho$ under $f_\rho$. This is a finite Borel measure on $\Sigma$, concentrated on ${\rm Nor}\,K$. The {\em support measures} $\Lambda_0(K,\cdot),\dots,\Lambda_{n-1}(K,\cdot)$ of $K$ can be defined by
\begin{equation}\label{4.9}  
\mu_{K,\rho} =\sum_{i=0}^{n-1} \rho^{n-i}\kappa_{n-i}\Lambda_i(K,\cdot).
\end{equation}
Thus, the normalization is different from \cite[(4.2.4)]{Sch93}; the connection is given by $n\kappa_{n-i} \Lambda_i(K,\cdot)=\binom{n}{i}\Theta_i(K,\cdot)$. The support measures have the property of weak continuity: if a sequence $(K_j)_{j\in\N}$ of convex bodies converges to a convex body $K$ in the Hausdorff metric, then the sequence $(\Lambda_i(K_j,\cdot))_{j\in\N}$ converges weakly to $\Lambda_i(K,\cdot)$. The topology of weak convergence can be metrized by the bounded Lipschitz metric $d_{bL}$ or the L\'{e}vy--Prokhorov metric $d_{LP}$ (see, e.g., Dudley \cite[Sec.~11.3]{Dud02}). Therefore, the question arises whether the weak continuity of the support measures can be improved to H\"older continuity with respect to one of these metrics. 

For bounded real functions $f$ on $\Sigma$ we define
$$ \|f\|_L:=\sup_{x\not= y}\frac{|f(x)-f(y)|}{|x-y|},\qquad \|f\|_\infty:=\sup_x |f(x)|.$$
For finite Borel measures $\mu,\nu$ on $\Sigma$, their {\em bounded Lipschitz distance} is defined by
$$ d_{bL}(\mu,\nu):= \sup\left\{ \left|\int_\Sigma f\,\D\mu- \int_\Sigma f\,\D\nu\right|:f:\Sigma\to\R,\; \|f\|_L\le 1,\;\|f\|_\infty\le 1\right\}.$$

The following lemma is modeled after Proposition 4.1 of Chazal, Cohen--Steiner and M\'{e}rigot \cite{CCM10}. Under the restriction to convex bodies, it extends the latter to the measures $\mu_{K,\rho}$.

\begin{lemma}\label{L4.1}
If $K,L\in\Kn$ are convex bodies and $\rho>0$, then
$$
d_{bL}(\mu_{K,\rho},\mu_{L,\rho}) \le \int_{K^\rho\cap L^\rho} |p_K-p_L|\,\D\Ha^n+ \int_{K^\rho\cap L^\rho} |u_K-u_L|\,\D\Ha^n +\Ha^n(K^\rho\triangle L^\rho),
$$
where $\triangle$ denotes the symmetric difference.
\end{lemma}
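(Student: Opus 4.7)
The plan is to unfold the definition of $d_{bL}$ and push every test function $f\colon\Sigma\to\R$ with $\|f\|_L\le 1$ and $\|f\|_\infty\le 1$ through the image-measure definition of $\mu_{K,\rho}$ and $\mu_{L,\rho}$. Because $\mu_{K,\rho}$ is the push-forward of $\Ha^n\fed K^\rho$ under $x\mapsto(p_K(x),u_K(x))$, and similarly for $L$, the change-of-variables formula gives
$$
\int_\Sigma f\,\D\mu_{K,\rho}-\int_\Sigma f\,\D\mu_{L,\rho}=\int_{K^\rho}f(p_K,u_K)\,\D\Ha^n-\int_{L^\rho}f(p_L,u_L)\,\D\Ha^n.
$$

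The key step is to split the two domains of integration across the common part $K^\rho\cap L^\rho$ and the symmetric difference $K^\rho\triangle L^\rho$, writing the right-hand side as
$$
\int_{K^\rho\cap L^\rho}\bigl[f(p_K,u_K)-f(p_L,u_L)\bigr]\,\D\Ha^n+\int_{K^\rho\setminus L^\rho}f(p_K,u_K)\,\D\Ha^n-\int_{L^\rho\setminus K^\rho}f(p_L,u_L)\,\D\Ha^n.
$$
On $K^\rho\cap L^\rho$ both pairs $(p_K,u_K)$ and $(p_L,u_L)$ are well defined, and the $1$-Lipschitz bound on $f$ with respect to the Euclidean norm on $\R^n\times\R^n$ gives
$$
\bigl|f(p_K,u_K)-f(p_L,u_L)\bigr|\le\bigl|(p_K-p_L,u_K-u_L)\bigr|\le|p_K-p_L|+|u_K-u_L|.
$$
On the two pieces of the symmetric difference I bound $|f|$ by $1$ and pick up only the $\Ha^n$-measure of those pieces.

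Putting these estimates together yields
$$
\left|\int_\Sigma f\,\D\mu_{K,\rho}-\int_\Sigma f\,\D\mu_{L,\rho}\right|\le\int_{K^\rho\cap L^\rho}|p_K-p_L|\,\D\Ha^n+\int_{K^\rho\cap L^\rho}|u_K-u_L|\,\D\Ha^n+\Ha^n(K^\rho\triangle L^\rho),
$$
and since the right-hand side is independent of $f$, taking the supremum over admissible test functions gives the desired inequality. No step is genuinely delicate here; the only thing to watch is the choice to compare $(p_K,u_K)$ with $(p_L,u_L)$ pointwise on the overlap (rather than, say, $(p_K,u_K)$ with $(p_K,u_L)$), which is what makes the two integrands on the overlap naturally match the two integral terms claimed in the lemma.
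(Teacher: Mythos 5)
Your proof is correct and follows essentially the same route as the paper: unfold $d_{bL}$ on test functions, push through the image-measure definition, split the domain over $K^\rho\cap L^\rho$ and the two halves of $K^\rho\triangle L^\rho$, then use the Lipschitz bound on the overlap and the sup-norm bound elsewhere. There is no substantive difference from the argument given in the paper.
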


\begin{proof}
Let $f:\Sigma\to\R$ be a function with $\|f\|_L\le 1$ and $\|f\|_\infty\le 1$. Using the transformation formula for integrals and the properties of $f$, we obtain
\begin{eqnarray*}
& & \left|\int_\Sigma f\,\D\mu_{K,\rho} -\int_\Sigma f\,\D\mu_{L,\rho} \right|\\
& & = \left|\int_{K^\rho} f\circ(p_K,u_K)\,\D\Ha^n -\int_{L^\rho} f\circ(p_L,u_L)\,\D\Ha^n\right|\\
& & \le \int_{K^\rho\cap L^\rho} \left|f\circ(p_K,u_K)-f\circ(p_L,u_L)\right|\,\D\Ha^n\\
& & \hspace{4mm}+\int_{K^\rho\setminus L^\rho}  \left|f\circ(p_K,u_K)\right|\,\D\Ha^n +\int_{L^\rho\setminus K^\rho}  \left|f\circ(p_L,u_L)\right|\,\D\Ha^n \\
& & \le \int_{K^\rho\cap L^\rho} |(p_K,u_K)-(p_L,u_L)|\,\D\Ha^n + \int_{K^\rho\setminus L^\rho}  1\,\D\Ha^n +\int_{L^\rho\setminus K^\rho} 1\,\D\Ha^n \\
& & \le \int_{K^\rho\cap L^\rho} (|p_K-p_L|+|u_K-u_L|)\,\D\Ha^n +\Ha^n(K^\rho\triangle L^\rho),
\end{eqnarray*}
from which the assertion follows.
\end{proof}

\noindent{\em Proof of Theorem} \ref{T2}. We assume that $K,L\in\Kn$ and  $d_H(K,L)=:\delta<1$. Let $R$ be the radius of a ball containing $K_2$ and $L_2$. For $0<\rho\le 1$ we use Lemma \ref{L4.1} (where for convex bodies, the estimation of the first and the third term on the right-hand side is easier than for the case of general compact sets considered in \cite{CCM10}). First, from Lemma 1.8.9 in \cite{Sch93} we get
\begin{equation}\label{4.10} 
\int_{K^\rho\cap L^\rho} |p_K-p_L|\,\D\Ha^n\le\sqrt{5D}\Ha^n(K^\rho\cap L^\rho)\sqrt{\delta} \le C_1(R)\sqrt{\delta},
\end{equation}
where $D={\rm diam}(K_\rho\cup L_\rho)$ and the constant $C_1(R)$ depends only on $R$.

About the distance function $d_K$, it is well known that
$$ \sup_{x\in\R^n}|d_K(x)-d_L(x)|=d_H(K,L)=\delta$$
and that
$$ \nabla d_K=u_K \qquad\mbox{on }\R^n\setminus K.$$
Therefore, it follows immediately from Theorem 3.5 of Chazal, Cohen--Steiner and M\'{e}rigot \cite{CCM10} (applied to $E={\rm int}(K^\rho\cap L^\rho)$) that
\begin{equation}\label{4.11} 
\int_{K^\rho\cap L^\rho} |u_K-u_L|\,\D\Ha^n\le C_2(R)\sqrt{\delta}.
\end{equation}

For the estimation of $\Ha^n(K^\rho\triangle L^\rho)$, let $x\in K^\rho\setminus L^\rho$; then $x\in K_\rho\setminus K$ and $x\notin L_\rho\setminus L$. If $x\in L$, then $d(K,x)\le\delta$, hence $x\in K_\delta\setminus K$. If $x\notin L$, then $x\notin L_\rho$ but $x\in K_\rho$, $K_\rho\subset(L_\delta)_\rho=L_{\rho+\delta}$, and hence $x\in L_{\rho+\delta}\setminus L_\rho$. It follows that
$$ K^\rho \setminus L^\rho \subset (K_\delta\setminus K) \cup (L_{\rho+\delta}\setminus L_\rho)$$
and hence 
\begin{eqnarray*}
\Ha^n(K^\rho\setminus L^\rho) &\le & \Ha^n(K_\delta)-\Ha^n(K) + \Ha^n(L_{\rho+\delta}) -\Ha^n(L_\rho)\\
&\le &C_3(R)\delta\le C_3(R)\sqrt{\delta}.
\end{eqnarray*}
Here $K$ and $L$ can be interchanged, and together with (\ref{4.10}), (\ref{4.11}) and Lemma \ref{L4.1} this gives
\begin{equation}\label{4.12}
d_{bL}(\mu_{K,\rho},\mu_{L,\rho}) \le  C_4(R)\sqrt{\delta}.
\end{equation}

To deduce an estimate for the support measures, we apply the usual procedure (e.g., \cite{Sch93}, p. 202) and choose in (\ref{4.9}) for $\rho$ each of the $n$ fixed values $\rho_j=j/n$, $j=1,\dots,n$, and solve the resulting system of linear equations (which has a non-zero Vandermonde determinant), to obtain representations
$$ \Lambda_i(K,\cdot) = \sum_{j=1}^n a_{i,j}\mu_{K,\rho_j},\qquad i=0,\dots,n-1,$$
with constants $a_{ij}$ depending only on $i,j$. Using the definition of the bounded Lipschitz metric, we deduce that
\begin{equation}\label{4.13} 
d_{bL}(\Lambda_i(K,\cdot),\Lambda_i(L,\cdot)) \le \sum_{j=1}^n|a_{ij}|d_{bL}(\mu_{K,\rho_j},\mu_{L,\rho_j}) \le C(R)\sqrt{\delta}.
\end{equation}
This completes the proof of Theorem \ref{T2}. \qed

\noindent Authors' addresses:\\[2mm]
Daniel Hug\\
Karlsruhe Institute of Technology, Department of Mathematics\\
D-76128 Karlsruhe, Germany\\
E-mail: daniel.hug@kit.edu\\[3mm]
Rolf Schneider\\
Mathematisches Institut, Albert-Ludwigs-Universit{\"a}t\\
D-79104 Freiburg i. Br., Germany\\
E-mail: rolf.schneider@math.uni-freiburg.de

\end{document}